\newtheorem{theorem}{Theorem}[section]
\newtheorem{prop}[theorem]{Proposition}
\newtheorem{lem}[theorem]{Lemma}
\theoremstyle{definition}
\newtheorem{defi}[theorem]{Definition}
\newtheorem{example}[theorem]{Example}
\newtheorem{remark}[theorem]{Remark}
\newtheorem{conjecture}[theorem]{Conjecture}
\newcommand{\R}{\ensuremath{\mathbb{R}}}
\newcommand{\Z}{\ensuremath{\mathbb{Z}}}
\begin{document}


%
%
\renewcommand{\subjclassname}{
\textup{2020} Mathematics Subject Classification}\subjclass[2020]{Primary 
57K12, 
57K10; 
Secondary 
57K45, 
55N99. 
}

\date{\today}
\keywords{Quandle homology; cocycle invariants; classical links; surface links}


\title[Shifting maps and cocycle invariants]{
Shifting chain maps in quandle homology and cocycle invariants
}


\author{Yu Hashimoto}
\address{Toshimagaoka-jyoshigakuen High School, 
1-25-22, Higashi-ikebukuro, Toshima-ku, Tokyo 170-0013, Japan}

\author{Kokoro Tanaka}
\address{Department of Mathematics, Tokyo Gakugei University, 
Nukuikita 4-1-1, Koganei, Tokyo 184-8501, Japan}
\email{kotanaka@u-gakugei.ac.jp}



\begin{abstract}
Quandle homology theory has been developed and 
cocycles have been used to 
define invariants of oriented classical or surface links. 
We introduce a \textit{shifting chain map} $\sigma$ on each quandle chain complex 
that lowers the dimensions by one. 
By using its pull-back 
$\sigma^\sharp$, 
each $2$-cocycle $\phi$ gives us the $3$-cocycle $\sigma^\sharp \phi$. 
%
For oriented classical links in the $3$-space, we explore relation between 
their quandle $2$-cocycle invariants associated with $\phi$
and 
their shadow $3$-cocycle invariants associated with $\sigma^\sharp \phi$. 
For oriented surface links in the $4$-space, we explore how powerful 
their quandle $3$-cocycle invariants associated with $\sigma^\sharp \phi$ are.  
Algebraic behavior of the shifting maps for low-dimensional (co)homology groups 
is also discussed.  
%
\end{abstract}

\maketitle


\section{Introduction}\label{sec:intro}

Quandles \cite{Joy,Mat} are algebraic structures whose axioms encode 
the movements of oriented classical links in $\R^3$ through their diagrams in $\R^2$. 
Later they turn out to be also useful for studying oriented surface links in $\R^4$ 
through their diagrams in $\R^3$. 
Basic link invariants derived from quandles are \textit{coloring number}s, and 
there are enhancements of coloring numbers, 
called \textit{quandle cocycle invariant}s and \textit{shadow cocycle invariant}s, 
by using quandle homology theory. 
For a quandle $X$ and an abelian group $A$, 
a (quandle) $n$-cocycle is a map from $X^n$ to $A$ satisfying some conditions; 
see Example~\ref{ex:2-cocycle} when $n=2$. 
%
%
Using a $2$-cocycle $\phi$ and a $3$-cocycle $\theta$, 
we have the following invariants of an oriented classical link $L$ in $\R^3$ 
and an oriented surface link $\mathcal{L}$ in $\R^4$:

\begin{itemize}
\item
a quandle $($$2$-$)$cocycle invariant $\Phi_\phi (L) \in \Z[A]$ 
of $L \subset \R^3$, 

\item
a quandle $($$3$-$)$cocycle invariant $\Phi_\theta (\mathcal{L}) \in \Z[A]$ 
of $\mathcal{L} \subset \R^4$, 

\item
a shadow $($$3$-$)$cocycle invariant $\Phi_\theta^x (L) \in \Z[A]$ 
of $L \subset \R^3$
for each $x \in X$, 
\end{itemize}
where $\Z[A]$ is the group ring of $A$ over $\Z$. 
Hence it is important to find $2$-cocycles and $3$-cocycles for studying 
oriented classical or surface links. 

In this paper, we introduce a \textit{shifting chain map} $\sigma$ in quandle homology theory. 
Using its dual cochain map $\sigma^\sharp$, 
the $(n+1)$-cocycle can be obtained from an $n$-cocycle for each $n \in \Z$.  
Although the precise definition 
will be given in Subsection~\ref{subsec:shift}, 
we now describe the explicit formula 
for a $2$-cocycle $\phi \colon X^2 \to A$. 
The map 
$\sigma^\sharp \phi \colon X^3 \to A$ is defined by the formula 
\begin{equation}
\sigma^\sharp \phi (x,y,z) = \phi(y,z) - \phi(x,z) + \phi(x,y) 
\label{eq:pullback}
\end{equation}
for each $x,y,z \in X$; see Example~\ref{ex:shift}.  
Given a $2$-cocycle $\phi$, 
it is natural to compare the shadow $3$-cocycle invariant 
$\Phi_{\sigma^\sharp \phi}^x(L)$ 
with the quandle $2$-cocycle invariant $\Phi_\phi (L)$  
for an oriented classical link $L$ and $x \in X$, 
and ask how powerful the quandle $3$-cocycle invariant 
$\Phi_{\sigma^\sharp \phi}(\mathcal{L})$ 
is for an oriented surface link $\mathcal{L}$. 
The following are our main results:

\begin{theorem}\label{thm:shadow-1}
Let $L$ be an oriented classical link in $\R^3$ and $\phi \colon X^2 \to A$ 
a $2$-cocycle for a quandle $X$ and an abelian group $A$. 
Then we have 
$$\Phi_{\sigma^\sharp \phi}^x (L) = \Phi_\phi (L) \in \Z[A]$$ 
for each $x \in X$.  
\end{theorem}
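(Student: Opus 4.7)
The plan is to unfold both invariants as sums of Boltzmann weights over colorings, substitute formula~\eqref{eq:pullback} at each crossing, and reduce the theorem to a per-coloring chain-level identity.

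Fix a diagram $D$ of $L$. Every arc coloring $C$ of $D$ by $X$ extends uniquely to a shadow coloring $\tilde C$ that assigns the color $x$ to a chosen base region. Writing $\epsilon_c$, $r_c$, $x_c$, $y_c$ for the sign, base-region color, under-arc color and over-arc color at each crossing $c$, formula~\eqref{eq:pullback} gives
\[
 \epsilon_c\, \sigma^\sharp\phi(r_c, x_c, y_c) \;=\; \epsilon_c\, \phi(x_c, y_c) \;+\; \epsilon_c\bigl[\phi(r_c, x_c) - \phi(r_c, y_c)\bigr].
\]
Summing over $c$, the shadow Boltzmann weight on $\tilde C$ differs from the quandle Boltzmann weight on $C$ by $\Delta(\tilde C) := \sum_c \epsilon_c[\phi(r_c, x_c) - \phi(r_c, y_c)] \in A$. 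Since $\Phi^x_{\sigma^\sharp\phi}(L)$ and $\Phi_\phi(L)$ are the formal sums in $\Z[A]$ of these respective weights over all arc colorings, the theorem is equivalent to the per-coloring identity $\Delta(\tilde C) = 0$.

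To establish this vanishing, I would argue at the chain level. Interpret $\Delta(\tilde C) = \phi\bigl(e(\tilde C)\bigr)$, where $e(\tilde C) := \sum_c \epsilon_c[(r_c, x_c) - (r_c, y_c)]$ is a quandle $2$-chain, and try to exhibit $e(\tilde C)$ as a $2$-boundary so that the $2$-cocycle $\phi$ vanishes on it. Applying the quandle boundary map to the natural candidate $3$-chain $\tilde c := \sum_c \epsilon_c (r_c, x_c, y_c)$ produces (up to sign) the target $e(\tilde C)$ together with a residual sum of the form
\[
 \sum_c \epsilon_c\bigl[(r_c * y_c,\, x_c * y_c) \;-\; (r_c * x_c,\, y_c)\bigr]
\]
involving the colors of the other corners at each crossing. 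Thus the vanishing of $\Delta(\tilde C)$ reduces to showing that this residual sum also pairs trivially with $\phi$.

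The main obstacle is dispatching this residual cleanly. Its vanishing (modulo boundaries) should reflect the fact that $\tilde c$ represents a cycle in the \emph{shadow} chain complex---the very condition guaranteeing that the shadow $3$-cocycle invariant is well-defined---combined with the self-distributivity $(r * x) * y = (r * y) * (x * y)$, which matches each residual term with a contribution coming from the ``opposite'' corner of the same crossing. A diagrammatic alternative worth exploring is to regroup $\Delta(\tilde C)$ by the arcs of $D$: each arc $\gamma$ of color $g$ appears as an incoming under-arc at exactly one crossing and as an over-arc at several others, so tracking how the base-region color evolves as one walks along $\gamma$ and invoking the $2$-cocycle condition on $\phi$ should produce a telescoping cancellation that proves $\Delta(\tilde C) = 0$ directly.
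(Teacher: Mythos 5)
Your reduction of the theorem to the per-coloring identity $\Delta(\tilde C)=0$ is correct and is indeed what the paper's definitions amount to once the Kronecker pairing is unwound; but the step you flag as ``the main obstacle'' is where all the content lies, and neither of your proposed resolutions closes it. The first is circular. Writing $Z'=\sum_c\epsilon_c(r_c;x_c,y_c)$ for the shadow $2$-cycle, the cycle condition $\partial_2 Z'=0$ in $C_1^Q(X)_X$ reads, term by term,
\begin{equation*}
\sum_c\epsilon_c\bigl[(r_c,x_c)-(r_c,y_c)+(r_c*x_c,y_c)-(r_c*y_c,x_c*y_c)\bigr]=0 ,
\end{equation*}
i.e.\ it identifies your residual with $-e(\tilde C)$ on the nose. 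Feeding this back into your computation of $\partial_3\tilde c$ only recovers the fact that $\tilde c=\iota(Z')$ is a $3$-cycle (automatic, since $\iota$ is a chain map), and yields no information about $\phi(e(\tilde C))$. Nor is there a local cancellation to be extracted from self-distributivity: the four terms displayed above are exactly $\partial_2$ of a single generator and do not vanish crossing by crossing, only in the global sum --- where they reproduce the tautology $\text{residual}=-e(\tilde C)$.

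The missing ingredient is an integer weight on the regions, namely the Alexander numbering, and this is precisely what the paper's argument supplies. The correct bounding $3$-chain for $e(\tilde C)$ is not $\tilde c$ but (up to sign) $\sum_c\epsilon_c\,(a_c+1)\,(r_c,x_c,y_c)$, where $a_c\in\Z$ is the Alexander number of the specified region at $c$; algebraically this is packaged by lifting the shadow cycle to the $X$-set $\Z\times X$ and using the chain homotopy $P_n(a,x_0;\bm{x})=a\cdot(x_0,\bm{x})$ of Proposition~\ref{prop:shadow-1}, after which the paper concludes by the purely formal duality of Proposition~\ref{prop:dual} rather than by manipulating state sums. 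The paper even warns that $\sigma\circ\iota$ is \emph{not} chain homotopic to $q_\sharp$ on $C_*^Q(X)_X$ itself --- only after precomposing with $p_\sharp\colon C_*^Q(X)_{\Z\times X}\to C_*^Q(X)_X$ --- which is a precise statement of why the cycle condition in the shadow complex alone cannot suffice. Your ``telescoping along arcs'' alternative could plausibly be made to work, but only if the bookkeeping reintroduces this integer labelling of the regions; as written, the proposal stops short of a proof.
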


\begin{theorem}\label{thm:2-knot}
Let $\mathcal{L}$ be an oriented surface link in $\R^4$ and $\phi \colon X^2 \to A$ 
a $2$-cocycle for a quandle $X$ and an abelian group $A$. 
Then $\Phi_{\sigma^\sharp \phi} (\mathcal{L})$ is trivial, that is, 
$$\Phi_{\sigma^\sharp \phi} (\mathcal{L}) 
= |\mathrm{Col}_X(\mathcal{L})| \cdot 0_A \in \Z[A],$$
where 
$|\mathrm{Col}_X(\mathcal{L})|$ is the $X$-coloring number of $\mathcal{L}$ 
and 
$0_A$ is the identity element of $A$. 
\end{theorem}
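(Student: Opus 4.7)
The plan is to reduce the statement to showing, for each $X$-coloring $C$ of a broken surface diagram of $\mathcal{L}$, that $(\sigma^\sharp \phi)(\gamma_C) = 0_A$, where $\gamma_C \in C_3(X)$ is the $3$-chain assembled from the triple points of the diagram: each triple point $t$ contributes $\epsilon_t (x_t, y_t, z_t)$, weighted by its sign $\epsilon_t$ and the colors $x_t, y_t, z_t$ of its three sheets. Since $\mathcal{L}$ is a closed surface, the standard accounting along double curves and around branch points gives $\partial \gamma_C = 0$, so $\gamma_C$ is a $3$-cycle in the quandle chain complex.

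The first main move is to exploit the adjoint definition of $\sigma^\sharp$. Dualizing formula (\ref{eq:pullback}) identifies $\sigma$ on $3$-chains by $\sigma(x,y,z) = (y,z) - (x,z) + (x,y)$, whence $(\sigma^\sharp \phi)(\gamma_C) = \phi(\sigma(\gamma_C))$. The problem is therefore reduced to showing that $\phi$ evaluates to $0_A$ on the $2$-chain $\sigma(\gamma_C)$ for every coloring $C$.

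The crux of the argument is to show $\sigma(\gamma_C)$ is a $2$-boundary in the quandle chain complex; once this is in hand, the fact that $\phi$ is a $2$-cocycle immediately forces $\phi(\sigma(\gamma_C)) = 0_A$. I would attempt this in two steps. First, reorganize $\sigma(\gamma_C)$ geometrically: the three $2$-simplices produced at each triple point correspond to the three double curves passing through it, each labelled by the ordered pair of sheet colors along that double curve (which is constant along the double curve by the coloring condition). Grouping over double-curve strata expresses $\sigma(\gamma_C)$ as a sum indexed by the double curves. Second, exhibit an explicit bounding $3$-chain, built from the $2$-dimensional sheet strata together with the coloring data, whose boundary recovers $\sigma(\gamma_C)$ modulo degenerate simplices, which vanish in the quandle quotient $C_*^Q(X)$.

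The main obstacle is this second step: producing the bounding chain with the correct signs, and verifying that contributions at branch points and at the endpoints of double curves either cancel in pairs or become degenerate. Once $\sigma(\gamma_C) \in B_2^Q(X)$ has been established for every $X$-coloring $C$, the sum $\sum_C \phi(\sigma(\gamma_C))$ collapses to $|\mathrm{Col}_X(\mathcal{L})|$ copies of $0_A$, yielding $\Phi_{\sigma^\sharp \phi}(\mathcal{L}) = |\mathrm{Col}_X(\mathcal{L})| \cdot 0_A$ as claimed.
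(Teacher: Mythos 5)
Your reduction is sound as far as it goes: writing $\Phi_{\sigma^\sharp\phi}(\mathcal{L})=\sum_C\phi(\sigma(\gamma_C))$ and observing that it suffices to show $\sigma(\gamma_C)\in B_2^Q(X)$ is exactly the right target. But the step you defer --- ``exhibit an explicit bounding $3$-chain'' --- is the entire content of the theorem, and your proposed route to it has two problems. First, you cannot hope to show that $\sigma$ of an arbitrary $3$-cycle is a boundary: the paper computes that $\sigma_*\colon H_3^Q(S_4)\to H_2^Q(S_4)$ is surjective onto $\Z_2$ (e.g.\ $\sigma(w_1)=z_1$ for the trefoil-derived cycle $w_1$), and Theorem~\ref{thm:shadow-1} shows that for cycles coming from shadow-colored \emph{classical} diagrams the analogous evaluation recovers the (generally nontrivial) $2$-cocycle invariant. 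So you must isolate what is special about cycles $\gamma_C$ arising from \emph{closed surface} diagrams, and your proposal never identifies that feature. Second, the geometric reorganization you sketch rests on a false premise: the ordered pair of sheet colors along a double point curve is \emph{not} constant --- it changes by $(x,y)\mapsto(x*z,y*z)$ each time the double curve passes through a triple point as the lower pair --- so the grouping over double-curve strata does not produce well-defined labels, and the bounding chain is not naturally supported on the sheet strata at all.

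The missing ingredient is the integer-valued depth function on the complementary regions of the diagram (equivalently, the $X$-set $\Z$ on which every generator of $\mathrm{As}(X)$ acts by $+1$). Each cycle $\gamma_C=\sum_\tau\varepsilon_\tau(x_\tau,y_\tau,z_\tau)$ lifts to a cycle $\sum_\tau\varepsilon_\tau(d(r_\tau);x_\tau,y_\tau,z_\tau)$ in $C_3^Q(X)_{\Z}$, where $d(r_\tau)$ is the depth of the specified region at $\tau$; this lift exists precisely because $\mathcal{L}$ is closed, so that the fundamental class is a cycle in the region-decorated complex. The paper's Proposition~\ref{prop:2-knot} then shows that $\sigma$ precomposed with the forgetful map $q_\sharp\colon C_*^Q(X)_\Z\to C_*^Q(X)$ is null-homotopic via $P_n(a;\bm{x})=(-1)^n a\cdot(\bm{x})$, which on your cycle produces the concrete bounding chain $-\sum_\tau\varepsilon_\tau\,d(r_\tau)\,(x_\tau,y_\tau,z_\tau)$ --- supported on the triple points and weighted by region depth, not built from the sheets. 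Without introducing this depth decoration (or an equivalent device), your two-step plan has no mechanism for producing the bounding chain, so as written the proposal has a genuine gap at its central step.
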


Theorem~\ref{thm:shadow-1} implies that 
any quandle $2$-cocycle invariant can be considered as a shadow $3$-cocycle invariant  
through the shifting chain map $\sigma$ for classical links. 
It follows that if the quandle $2$-cocycle invariant 
associated with a $2$-cocycle $\phi$ is non-trivial for some classical link 
then the cohomology class of the $3$-cocycle $\sigma^\sharp \phi$ is non-trivial. 
On the other hand, Theorem~\ref{thm:2-knot} implies that 
the $3$-cocycle $\sigma^\sharp \phi$ gives us a trivial quandle $3$-cocycle invariant 
for surface links, 
even if the cohomology class of $\sigma^\sharp \phi$ is non-trivial.

This paper is organized as follows. 
After reviewing quandles and their homology theory, 
we define shifting chain maps, which are our main subjects, in Section~\ref{sec:shift}. 
Using a generalization of quandle homology theory reviewed in Section~\ref{sec:X-set},  
we define the quandle cocycle invariants and the shadow cocycle invariants in Section~\ref{sec:inv}. 
Key ingredients, called the fundamental quandles and the fundamental classes, for defining 
cocycle invariants are reviewed in Appendix~\ref{sec:fund}. 
Section~\ref{sec:proof} is devoted to proving Theorem~\ref{thm:shadow-1} and ~\ref{thm:2-knot}. 
Algebraic behavior of the shifting maps for low-dimensional (co)homology groups 
is studied in Section~\ref{sec:behavior}.

\section{Shifting chain maps}\label{sec:shift}
We review a quandle \cite{Joy,Mat} and its homology theory \cite{CJKLS}, 
and introduce a shifting chain map on a quandle chain complex 
that lowers the dimensions by one.

\subsection{Quandle homology}\label{subsec:quandle}
A \textit{quandle} is a set $X$ equipped with a binary operation $* \colon X \times X \to X$ 
satisfying the following three axioms for any $x,y,z \in X$: (Q1) $x * x = x$, 
(Q2) the map $s_x \colon X \to X$, 
defined by $\bullet \mapsto \bullet * x$, is bijective, 
and (Q3) $(x * y) * z = (x * z) * (y * z)$. 
Typical examples such as the dihedral quandles and the tetrahedral quandle will 
be reviewed in Section~\ref{sec:behavior}. 

Let $C_n^R(X)$ be the free abelian group generated by $X^n$ 
for $n \geq 1$ and $C_n^R(X) = 0$ for $n \leq 0$. 
Define two homomorphisms 
$\partial_n^0, \partial_n^1 \colon C_n^R(X) \to C_{n-1}^R(X)$ 
by 
\begin{align*}
\partial_n^0 (x_1, \ldots , x_n) &= 
\displaystyle\sum_{i=1}^n (-1)^i (x_1, \ldots , x_{i-1}, x_{i+1}, \ldots , x_n) \\
\partial_n^1 (x_1, \ldots , x_n) &= 
\displaystyle\sum_{i=1}^n (-1)^i (x_1*x_i, \ldots , x_{i-1}*x_i, x_{i+1}, \ldots , x_n)
\end{align*}
for $n \geq 2$ and $\partial_n^0 = \partial_n^1 = 0$ for $n \leq 1$. 
By a direct computation, we have  
\begin{equation}
\partial_{n-1}^0 \circ \partial_n^0 = 0, \quad \partial_{n-1}^1 \circ \partial_n^1 = 0
\quad \text{and} \quad 
\partial_{n-1}^1 \circ \partial_n^0 + \partial_{n-1}^0 \circ \partial_n^1 = 0.  
\label{eq:deldel}
\end{equation}
Let $\partial_n \colon C_n^R(X) \to C_{n-1}^R(X)$ be a homomorphism 
defined by $\partial_n = \partial_n^0 - \partial_n^1$. 
It follows from Equation~\eqref{eq:deldel} that $\partial_{n-1} \circ \partial_n = 0$. 
Then $C_*^R(X) = (C_n^R(X) , \partial_n)$ is a chain complex. 

Let $C_n^D(X)$ be the subgroup of $C_n^R(X)$ generated by 
\[
\{ (x_1, \ldots, x_n) \in X^n \mid x_i = x_{i+1} \ \text{for  some} \ i \}
\]
for $n \geq	 2$ and $C_n^D(X) = 0$ for $n \leq 1$. 
We have 
$\partial_n^0 (C_n^D(X)) \subset C_{n-1}^D(X)$ and 
$\partial_n^1 (C_n^D(X)) \subset C_{n-1}^D(X)$, 
hence 
$C_*^D(X) = (C_n^D(X) , \partial_n)$ is a subcomplex of $C_*^R(X)$. 
Then $C_*^Q(X) = (C_n^Q(X), \partial_n)$ is defined to be 
the quotient complex $C_*^R(X) / C_*^D(X)$ and 
called the \textit{quandle chain complex} of $X$, where 
all the induced boundary maps are again denoted by $\partial_n$'s. 
The $n$th group of cycles in $C_n^Q(X)$ is denoted by $Z_n^Q(X)$, and 
the $n$th homology group of this complex is called the $n$th 
\textit{quandle homology group} and is denoted by $H_n^Q(X)$. 

For an abelian group $A$, define the cochain complex
\[
C^*_Q(X;A) = \mathrm{Hom}_\Z(C_*^Q(X), A) 
\quad \text{and} \quad
\delta^* = \mathrm{Hom}(\partial_*, \mathrm{id})
\]
in the usual way. 
The $n$th group of cocycles in $C^n_Q(X;A)$ is denoted by $Z^n_Q(X;A)$, and 
%
the $n$th cohomology group of this complex is called the $n$th 
\textit{quandle cohomology group} and is denoted by $H^n_Q(X;A)$. 
\begin{example}\label{ex:2-cocycle}
When $C^n_R(X;A) = \mathrm{Hom}_\Z(C_n^R(X), A)$ is 
canonically identified with the set 
of all maps from $X^n$ to $A$, 
a map $\phi \colon X^2 \to A$ is a quandle $2$-cocycle if it satisfies 
\[
\phi(x,x) = 0_A \quad \text{and} \quad 
\phi(x,z) - \phi(x,y) - \phi(x*y,z) + \phi(x*z,y*z) = 0_A
\]
for any $x,y,z \in X$, where the first and second condition follow from 
\[
\phi(C^D_2(X)) = \{0_A\}
\quad  \text{and} \quad  
\phi(\partial_3(C^R_3(X)))=\{0_A\}, 
\]
respectively. 
\end{example}

\subsection{Shifting chain map}\label{subsec:shift}
Let $\sigma_n, \tilde{\sigma}_n \colon C_n^Q(X) \to C_{n-1}^Q(X)$ be two homomorphisms 
defined by 
\[ 
\sigma_n = (-1)^n \partial_n^0 \quad \text{and} \quad \tilde{\sigma}_n = (-1)^n \partial_n^1 
\]
for a quandle $X$. 
It follows from Equation~(\ref{eq:deldel}) that 
the set of maps $\sigma_n$ and that of maps $\tilde{\sigma}_n$ form 
two chain maps 
$\sigma, \tilde{\sigma} \colon C_*^Q(X) \to C_{*-1}^Q(X)$.  
These two chain maps are closely related as follows. 

\begin{prop}
The chain map $\sigma$ is chain homotopic to $\tilde{\sigma}$.
\end{prop}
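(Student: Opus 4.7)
The plan is to exhibit an explicit chain homotopy. The key observation driving the whole argument is that, straight from the definitions,
\[
\sigma_n - \tilde{\sigma}_n \;=\; (-1)^n(\partial_n^0 - \partial_n^1) \;=\; (-1)^n \partial_n,
\]
so the difference is already a scalar multiple of the quandle boundary $\partial_n$ itself. This strongly suggests that a chain homotopy between $\sigma$ and $\tilde{\sigma}$ can be produced as a scalar multiple of the identity in each degree, without needing to manipulate the combinatorics of the face operators $\partial_n^0, \partial_n^1$ in any further detail.

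Concretely, I would look for $h_n \colon C_n^Q(X) \to C_n^Q(X)$ of the form $h_n = c_n \cdot \mathrm{id}_{C_n^Q(X)}$ for some integer sequence $(c_n)_{n \geq 0}$. Such an $h_n$ descends from $C_n^R(X)$ to the quotient $C_n^Q(X) = C_n^R(X)/C_n^D(X)$ automatically, because the identity preserves the degenerate subcomplex. The chain-homotopy equation (for maps of degree $-1$),
\[
\sigma_n - \tilde{\sigma}_n \;=\; \partial_n \circ h_n + h_{n-1} \circ \partial_n,
\]
then collapses, after substitution of the key observation above and $h_n = c_n \cdot \mathrm{id}$, to the scalar recurrence
\[
c_n + c_{n-1} \;=\; (-1)^n \qquad (n \geq 1),
\]
which is solved (taking $c_0 = 0$) by $c_n = (-1)^n n$.

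The final step is to verify that $h_n = (-1)^n n \cdot \mathrm{id}_{C_n^Q(X)}$ actually satisfies the chain-homotopy identity; this is essentially a one-line computation once the recurrence is in place. I do not expect any real obstacle: the only subtlety is to pin down the correct sign convention for chain homotopies between chain maps of degree $-1$ (so that the equation above, rather than its $\partial h - h\partial$ variant, is the one to verify), and to confirm that $h_n$ descends to the quotient, both of which are routine.
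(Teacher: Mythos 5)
Your proposal is correct and arrives at exactly the chain homotopy used in the paper, namely $P_n = (-1)^n n \cdot \mathrm{id}$; the paper simply states this map outright and verifies $P_{n-1}\circ\partial_n + \partial_n\circ P_n = (-1)^n\partial_n = \sigma_n - \tilde{\sigma}_n$, which is the same computation as your recurrence $c_n + c_{n-1} = (-1)^n$. No gaps.
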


\begin{proof}
Let $P_n \colon C_n^Q(X) \to C_n^Q(X)$ be a homomorphism 
defined by $P_n = (-1)^n n \cdot \mathrm{id}$. 
Direct computations show that 
\[
P_{n-1} \circ \partial_n = (-1)^{n-1} (n-1) \cdot \partial_n 
\quad \text{and} \quad 
\partial_n \circ P_{n} = (-1)^n n \cdot \partial_n , 
\]
hence  we have 
\[
P_{n-1} \circ \partial_n + \partial_n \circ P_{n} = 
(-1)^n \cdot \partial_n = \sigma_n - \tilde{\sigma}_n.
\] 
This implies that the set of maps $P_n$ is a chain homotopy between 
$\sigma$ and $\tilde{\sigma}$. 
\end{proof}

\begin{defi}
The chain map $\sigma \colon C_*^Q(X) \to C_{*-1}^Q(X)$ is called a 
\textit{shifting $($chain$)$ map} on the quandle chain complex $C_*^Q(X)$. 
\end{defi}
The shifting map $\sigma$ induces a cochain map  
$\sigma^\sharp \colon C^{*-1}_Q(X;A) \to C^{*}_Q(X;A)$ 
for an abelian group $A$, 
and then induces homomorphisms 
\[
\sigma_* \colon H^Q_*(X) \to H^Q_{*-1}(X)
\quad \text{and} \quad  
\sigma^* \colon H_Q^{*-1}(X;A) \to H_Q^*(X;A), 
\]
in the usual way. 
Behavior of these homomorphisms will be discussed in Section~\ref{sec:behavior}. 
%
\begin{example}\label{ex:shift}
A quandle $2$-cocycle $\phi \colon X^2 \to A$ 
is sent to the quandle $3$-cocycle 
$\sigma^\sharp \phi \colon X^3 \to A$ such that 
\begin{align*}
\sigma^{\sharp} \phi (x,y,z) &= \phi (\sigma_3(x,y,z)) = 
\phi \left( (-1)^3 \partial_3^0 (x,y,z) \right) \\
&= \phi \left( 
(y,z) - (x,z) + (x,y)
\right)
=\phi(y,z) - \phi(x,z) + \phi(x,y) 
\end{align*}
for each $x,y,z \in X$. 
This is nothing but Formula~\eqref{eq:pullback} shown in Section~\ref{sec:intro}. 
\end{example}

\begin{remark}
We mention here a few related topics. 
Another kind of shifting chain map lowering the dimension by one 
was discussed in \cite{CJKS-shift}. 
However, as pointed out in \cite[Remark 23]{NP}, their map is not a chain map. 
Shifting chain maps raising the dimension by one and two were defined in \cite{NP} 
for some class of quandles including dihedral quandles of odd order.  
It might be interesting to investigate the composite of our shifting chain map 
and their shifting maps. 
\end{remark}

\section{Generalized quandle homology  
}\label{sec:X-set}

We review quandle homology theory with local coefficients, 
called generalized quandle homology theory \cite{AG, CEGS}, 
which will be a bridge between quandle cocycle invariants and 
shadow cocycle invariants in Section~\ref{sec:proof}.   
Although our notational conventions for generalized quandle homology theory 
are based on \cite{Kam-book}, 
they are essentially the same, as pointed out in \cite[Remark 1 and 2]{IK}. 
We start with reviewing 
the associated group \cite{FR,Joy,Mat} of a quandle and its action. 

\subsection{Associated group of a quandle 
}
%
The \textit{associated group} $\mathrm{As}(X)$ of a quandle $X$ is defined by 
\[ 
\mathrm{As}(X):=\langle x \ (x \in X) \mid x*y = y^{-1} x y \ (x,y \in X) \rangle .
\]
A set equipped with a right action of $\mathrm{As}(X)$ is called an \textit{$X$-set}. 
The following are typical examples of $X$-sets. 

\begin{example}\label{ex:X-set}
These five $X$-sets will appear in what follows. 
%
%
\begin{enumerate}
\item
The set $X$ itself is naturally an $X$-set. 
A right action of $\mathrm{As}(X)$ on $X$ is given by 
$y \cdot x = y \ast x$ and $y \cdot x^{-1} = y \ast^{-1} x$ for $x,y \in X$, 
where $x$ is regarded as an element in $\mathrm{As}(X)$. 

\item
The set $\mathrm{As}(X)$ is naturally an $X$-set. 
A right action of $\mathrm{As}(X)$ on $\mathrm{As}(X)$ is given by 
$h \cdot g = hg$ for $g,h \in \mathrm{As}(X)$. 
%
%

\item
A singleton $Y= \{ y_0\}$ is an $X$-set. 
A right action of $\mathrm{As}(X)$ on $Y$ is given by 
$y_0 \cdot g = y_0$ for $g \in \mathrm{As}(X)$. 
%
%

\item
The set $\Z$ of all integers is an $X$-set. 
A right action of $\mathrm{As}(X)$ on $\Z$ is given by 
$y \cdot x = y + 1$ and $y \cdot x^{-1} = y -1$ for $x \in X$ and $y \in \Z$, 
where $x$ is regarded as an element in $\mathrm{As}(X)$. 

\item
For two $X$-sets $Y_1$ and $Y_2$, 
the product $Y_1 \times Y_2$ is an $X$-set. 
A right action of $\mathrm{As}(X)$ on $Y_1 \times Y_2$ is given by 
$(y_1, y_2) \cdot g = (y_1 \cdot g, y_2 \cdot g)$ for $y_1 \in Y_1$, $y_2 \in Y_2$ and 
$g \in \mathrm{As}(X)$. 
%
%
\end{enumerate}\end{example}

Let $Y,Z$ be $X$-sets. A map $p \colon Y \to Z$ is called an \textit{$X$-map} 
if it satisfies 
$$p(y \cdot g) = p(y) \cdot g$$ 
for any $y \in Y$ and $g \in \mathrm{As}(X)$.
The following are typical examples of $X$-maps. 

\begin{example}\label{ex:X-map}
These three $X$-maps will appear in what follows. 
%
%
\begin{enumerate}
\item
Let $Y$ be an $X$-set.  
For each element $y \in Y$, 
a map $s^y \colon \mathrm{As}(X) \to Y$, defined by $s^y(g) = y \cdot g$ 
for $g \in \mathrm{As}(X)$, is an $X$-map. 

\item
Let $Y_1,Y_2$ be two $X$-sets. For the product $Y_1 \times Y_2$, 
%
%
the projection $p_i \colon Y_1 \times Y_2 \to Y_i$ onto the $i$-th factor 
is an $X$-map for each $i=1,2$. 

\item
Let $Y$ be an $X$-set. For a singleton $\{y_0\}$, 
the unique map $q \colon Y \to \{y_0\}$ is an $X$-map. 
\end{enumerate}\end{example}

\subsection{Generalized quandle homology
}\label{subsec:GQ-homology}
For a quandle $X$ and an $X$-set $Y$, 
let $C_n^R(X)_Y$ be the free abelian group generated by $Y \times X^n$ 
for $n \geq 1$ and $C_0^R(X)_Y$ the free abelian group generated by $Y$. 
Put $C_n^R(X) = 0$ for $n \leq -1$. 
Define two homomorphisms 
$\partial_n^0, \partial_n^1 \colon C_n^R(X)_Y \to C_{n-1}^R(X)_Y$ 
by 
\begin{align*}
\partial_n^0 (y; x_1, \ldots , x_n) &= 
\displaystyle\sum_{i=1}^n (-1)^i (y; x_1, \ldots , x_{i-1}, x_{i+1}, \ldots , x_n) \\
\partial_n^1 (y; x_1, \ldots , x_n) &= 
\displaystyle\sum_{i=1}^n (-1)^i (y \cdot x_i; x_1*x_i, \ldots , x_{i-1}*x_i, x_{i+1}, \ldots , x_n)
\end{align*}
for $n \geq 1$ and $\partial_n^0 = \partial_n^1 = 0$ for $n \leq 0$. 
Let $\partial_n \colon C_n^R(X)_Y \to C_{n-1}^R(X)_Y$ be a homomorphism 
defined by $\partial_n = \partial_n^0 - \partial_n^1$. 
We can check that $\partial_{n-1} \circ \partial_n = 0$ in the same way,  
hence $C_*^R(X)_Y = (C_n^R(X)_Y , \partial_n)$ is a chain complex. 

Let $C_n^D(X)_Y$ be the subgroup of $C_n^R(X)_Y$ generated by 
\[
\{ (y; x_1, \ldots, x_n) \in Y \times X^n \mid x_i = x_{i+1} \ \text{for  some} \ i \}
\]
for $n \geq	 2$ and $C_n^D(X)_Y = 0$ for $n \leq 1$. 
We have $\partial_n^0 (C_n^D(X)_Y) \subset C_{n-1}^D(X)_Y$ and 
$\partial_n^1 (C_n^D(X)_Y) \subset C_{n-1}^D(X)_Y$, hence 
$C_*^D(X)_Y = (C_n^D(X)_Y , \partial_n)$ is a subcomplex of $C_*^R(X)_Y$. 
Then $C_*^Q(X)_Y = (C_n^Q(X)_Y, \partial_n)$ is defined to be 
the quotient complex $C_*^R(X)_Y / C_*^D(X)_Y$ and 
called the \textit{generalized quandle chain complex} of $X$, where 
all the induced boundary maps are again denoted by $\partial_n$'s. 
The $n$th group of cycles in $C_n^Q(X)_Y$ is denoted by $Z_n^Q(X)_Y$, and 
the $n$th homology group of this complex is called the $n$th 
\textit{generalized quandle homology group} and is denoted by $H_n^Q(X)_Y$. 
When $Y$ is a singleton $\{y_0\}$, a set of natural maps 
$C_n^Q(X)_{\{y_0\}} \to C_n^Q(X)$ defined by $(y_0; \bm{x}) \mapsto (\bm{x})$ 
becomes a chain isomorphism.
 
For an abelian group $A$, define the cochain complex
\[
C^*_Q(X;A)_Y = \mathrm{Hom}_\Z(C_*^Q(X)_Y, A) ,\quad 
\delta^* = \mathrm{Hom}(\partial_*, \mathrm{id})
\]
in the usual way. 
The $n$th group of cocycles in $C^n_Q(X;A)_Y$ is denoted by $Z^n_Q(X;A)_Y$, and 
the $n$th cohomology group of this complex is called the $n$th 
\textit{generalized quandle cohomology group} and is denoted by $H^n_Q(X;A)_Y$.

\subsection{Two types of induced chain maps
}\label{subsec:induced}
We introduce two types of induced chain maps, which will be essentially 
used to define various cocycle invariants in the next section.  

First, we define a chain map induced from a quandle homomorphism. 
Let $Q,X$ be two quandles. 
Any quandle homomorphism $f \colon Q \to X$ 
induces a group homomorphism 
$\mathrm{As}(f) \colon \mathrm{As}(Q) \to \mathrm{As}(X)$ defined by 
$\mathrm{As}(f)(x) = f(x)$ and $\mathrm{As}(f)(x^{-1}) = {f(x)}^{-1}$ for $x \in X$, 
where $x$ and $f(x)$ are regarded as an element in $\mathrm{As}(Q)$ and 
$\mathrm{As}(X)$ respectively. 
Let $f_{\sharp, n} \colon C_n^Q(Q)_{\mathrm{As}(Q)} \to C_n^Q(X)_{\mathrm{As}(X)}$ be a homomorphism defined by
by 
\[
f_{\sharp, n} (g; \bm{x}) = 
\Bigl(
\mathrm{As}(f)(g); (\overbrace{f\times \cdots \times f}^{\text{$n$ times}}) (\bm{x}) 
\Bigr) 
\]
for each $n \geq 1$ and $f_{\sharp, 0}(g) = ( \mathrm{As}(f)(g) )$, 
where $g \in \mathrm{As}(X)$ and $\bm{x} \in X^n$. 
Put $f_{\sharp, n}=0$ for $n \leq -1$. 
Then the set of maps  $f_{\sharp, n}$ 
becomes a chain map
$f_\sharp \colon C_*^Q(Q)_{\mathrm{As}(Q)} \to C_*^Q(X)_{\mathrm{As}(X)}$.

Second, we define a chain map induced from an $X$-map for a quandle $X$. 
Let $Y,Z$ be two $X$-sets, and 
$p \colon Y \to Z$ an $X$-map. 
We define a homomorphism $p_{\sharp, n} \colon C_n^Q(X)_Y \to C_n^Q(X)_Z$ 
by $p_{\sharp,n} (y; \bm{x}) = (p(y); \bm{x})$ for $n \geq 1$ and $p_{\sharp, 0}(y) = (p(y))$. 
Put $p_{\sharp,n} = 0$ for $n \leq -1$. 
Then the set of maps $p_{\sharp, n}$ 
becomes a chain map
$p_\sharp \colon C_*^Q(X)_Y \to C_*^Q(X)_Z$ . 
For each element $y \in Y$, we have two $X$-maps 
$s^y \colon \mathrm{As}(X) \to Y$ and $s^{p(y)} \colon \mathrm{As}(X) \to Z$ 
as in Example~\ref{ex:X-map} (1). 
The following is easy to prove. 

\begin{lem}\label{lem:X-map}
We have $s^{p(y)} = p \circ s^y \colon \mathrm{As}(X) \to Z$ for any $y \in Y$.  
We also have  
$s^{p(y)}_\sharp = p_\sharp \circ  s^y_\sharp \colon 
C_*^Q(X)_{\mathrm{As}(X)} \to C_*^Q(X)_Z$
for any $y \in Y$.
\end{lem}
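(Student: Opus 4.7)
The plan is to unwind the definitions in two steps. First, I would verify the pointwise identity $s^{p(y)} = p \circ s^y$ by direct computation on any $g \in \mathrm{As}(X)$: by the definition of $s^y$ in Example~\ref{ex:X-map}(1) we have $(p \circ s^y)(g) = p(y \cdot g)$, and since $p$ is an $X$-map this equals $p(y) \cdot g$, which is $s^{p(y)}(g)$. So the first assertion is nothing more than the $X$-equivariance of $p$ repackaged through the notation $s^{(-)}$.

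Next, I would deduce the chain-level identity by checking it on generators. For $n \geq 1$ and a generator $(g; \bm{x})$ of $C_n^Q(X)_{\mathrm{As}(X)}$, both maps act trivially on the $X^n$-coordinate by the definition of the induced chain maps $p_\sharp$ and $s^y_\sharp$ in Subsection~\ref{subsec:induced}, so the identity reduces to the equality of first coordinates $p(s^y(g)) = s^{p(y)}(g)$, which is exactly the first statement. The degree-zero case is handled identically on a generator $g \in \mathrm{As}(X)$, where both compositions produce $(s^{p(y)}(g)) \in C_0^Q(X)_Z$; in negative degrees both sides vanish.

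Since the conclusion is forced by the definitions, I do not expect a genuine obstacle. The only thing to be careful about is distinguishing how each induced map acts: $s^y_\sharp$ changes the first coordinate from $\mathrm{As}(X)$ to $Y$ while leaving $\bm{x}$ untouched, and $p_\sharp$ transports the first coordinate from $Y$ to $Z$ in the same passive manner, so that the composite reshuffles only the first coordinate and the lemma follows immediately from the pointwise statement.
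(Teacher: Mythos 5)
Your proof is correct and is exactly the routine definition-unwinding the paper has in mind; the paper itself omits the argument, stating only that the lemma "is easy to prove." Both steps — the pointwise identity via $X$-equivariance of $p$, and the chain-level identity checked on generators (where only the first coordinate is affected) — are complete and match the intended reasoning.
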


\section{Cocycle invariants}\label{sec:inv}

We define the quandle cocycle invariant \cite{CJKLS} 
and shadow cocycle invariant \cite{CKS01}
as two types of specializations of the generalized quandle cocycle invariant \cite{CEGS}. 
We start with recalling the fundamental classes 
(of an oriented classical link \cite{CKS01,Eis} and surface link \cite{CKS01,Tan}) 
valued in the generalized homology group of the fundamental quandles 
(of the classical link \cite{Joy,Mat} and surface link \cite{Kam,PR}).  
The fundamental class is considered as a universal object for 
the generalized quandle cocycle invariant, and 
naturally derived from homological interpretation \cite{CKS03,FRS} 
of the invariant. 

\subsection{Fundamental quandle and fundamental class}
%
To each diagram $D$ of an oriented classical link (or surface link), 
we can associate a quandle and a homology class as invariants of the link; 
the \textit{fundamental quandle} $Q_D$ and the 
\textit{fundamental class} $[D]$, where  $[D]$ takes its value 
in the $2$nd (or $3$rd) generalized quandle homology group 
$H_*^Q(Q_D)_{\mathrm{As}(Q_D)}$.  
%
Since we do not use the precise constructions of both invariants in the rest of the paper,
we postpone their definition to Appendix~\ref{sec:fund} 
and here introduce their important properties instead. 
For another diagram $D'$, 
if the oriented classical or surface link represented by $D'$ 
is equivalent to that represented by $D$, 
then there exists a quandle isomorphism from $Q_D$ to $Q_{D'}$ such that 
the induced group isomorphism on the 
homology groups sends $[D]$ to $[D']$; see Theorem~\ref{thm:1-inv} and~\ref{thm:2-inv}.  

Let $X$ be a finite quandle. 
For a diagram $D$ of an oriented classical link $L$ or an oriented surface link $\mathcal{L}$, 
let $\mathrm{Col}_X (D)$ be the set of all quandle homomorphisms from $Q_D$ to $X$.    
It follows from the aforementioned properties of $Q_D$ that  
its cardinality $| \mathrm{Col}_X (D) |$ does not depend on the choice of the diagram 
$D$ for the link. 
Hence, when $D$ represents $L$ (or $\mathcal{L}$),
it is also denoted by 
$| \mathrm{Col}_X (L) |$ (or $| \mathrm{Col}_X (\mathcal{L}) |$) 
and called the \textit{$X$-coloring number} of $L$ (or $\mathcal{L}$). 

\subsection{Generalized quandle cocycle invariant
}
Let $X$ be a finite quandle, $A$ an abelian group, $Y$ an $X$-set and $y$ an element in $Y$.  
Let $D$ be a diagram of an oriented classical link $L$ or an oriented surface link $\mathcal{L}$.  
When $D$ represents the classical link (or surface link), 
for a given $2$-cocycle (or $3$-cocycle) $\lambda \in Z^*_Q(X;A)_Y$, 
we define a \textit{generalized quandle cocycle invariant} $\Psi_\lambda^y (D)$ 
by  
\[
\Psi_\lambda^y (D) = \displaystyle\sum_{c \colon Q_D \to X} 
\left\langle  s^y_* \circ c_* [D],\ [\lambda]  \right\rangle \ \in \Z[A] , 
\]
where $c_* \colon H_*^Q(Q_D)_{\mathrm{As}( Q_D )} \to H_*^Q(X)_{\mathrm{As} (X)}$ 
is the induced homomorphism from a quandle homomorphism $c \colon Q_D \to X$ 
as in the first half of Subsection~\ref{subsec:induced}, 
the map $s^y_* \colon H_*^Q(X)_{\mathrm{As} (X)} \to  H_*^Q(X)_Y$ is 
the induced homomorphism from an $X$-map $s^y \colon \mathrm{As}(X) \to Y$
as in the second half of Subsection~\ref{subsec:induced},  
the element $[\lambda]$ is a cohomology class of $\lambda$, and 
\[
\langle \ , \ \rangle \colon H_*^Q(X)_Y \otimes H^*_Q(X;A)_Y \to A
\]
is a Kronecker product. 
We note that the above summation is finite, since the cardinality of X is finite. 
By definition, the invariant $\Psi_\lambda^y (D)$  
depends only on the cohomology class $[\lambda]$ of $\lambda$. 
Thus, when $\lambda$ is a coboudary, the invariant $\Psi_\lambda^y (D)$ is trivial, 
that is, 
$$\Psi_\lambda^y (D) = |\mathrm{Col}_X (D)| \cdot 0_A \in \Z[A],$$ 
where $0_A$ is the identity element of $A$. 
It follows from Theorem~\ref{thm:1-inv} and~\ref{thm:2-inv} that 
the invariant $\Psi_\lambda^y (D)$ does not depend on the choice of the diagram $D$ for the link. 
Hence, when $D$ represents $L$ (or $\mathcal{L}$), 
it is also denoted by $\Psi_\lambda^y (L)$ (or $\Psi_\lambda^y (\mathcal{L})$). 
%

\subsection{Quandle cocycle invariant}
Let $X$ be a finite quandle and $A$ an abelian group. 
Let $L$ be an oriented classical link and $\mathcal{L}$ an oriented surface link.   
For given $2$-cocycle $\phi \in Z^2_Q(X;A)$ and $3$-cocycle  $\theta \in Z^3_Q(X;A)$, 
we define \textit{quandle cocycle invariant}s $\Phi_\phi (L)$ and $\Phi_\theta (\mathcal{L})$ 
by 
\[
\Phi_\phi (L) = \Psi_\phi^{y_0} (L) \in \Z[A] 
\quad \text{and} \quad 
\Phi_\theta (\mathcal{L}) = \Psi_\theta^{y_0} (\mathcal{L}) \in \Z[A] ,
\]
where $\phi$ and $\theta$ are regarded as the elements 
in $Z^*_Q(X;A)_{\{ y_0 \}}$ for a singleton $\{ y_0\}$
by naturally identifying $C_Q^*(X;A)_{\{y_0\}}$ with $C_Q^*(X;A)$; 
see Subsection~\ref{subsec:GQ-homology}.

\subsection{Shadow cocycle invariant}\label{subsec:shadow}
For a quandle $X$,  
we define a homomorphism $\iota_n \colon C_n^Q(X)_X \to C_{n+1}^Q(X)$ 
by 
$\iota_n(x_0; \bm{x}) = (-1)^n (x_0, \bm{x})$
for $n \geq 1$ and $\iota_0 (x_0) = (x_0)$. Put $\iota_n = 0$ for $n \leq -1$. 
It follows from Equation~\eqref{eq:deldel} in Subsection~\ref{subsec:quandle} that 
the set of maps $\iota_n$ forms a chain map $\iota \colon C_*^Q(X)_X \to C_{*+1}^Q(X)$.   
Let $A$ be an abelian group and fix an element $x$ in $X$. 
Let $L$ be an oriented classical link and $\mathcal{L}$ an oriented surface link.   
When $X$ is finite, 
for given $3$-cocycle $\theta \in Z^3_Q(X;A)$ and $4$-cocycle  $\psi \in Z^4_Q(X;A)$, 
we define \textit{shadow cocycle invariant}s  $\Phi_\theta^x (L)$ and $\Phi_\psi^x (\mathcal{L})$  
by 
\[
\Phi_\theta^x (L) = \Psi_{\iota^\sharp \theta}^x (L)  \in \Z[A] 
\quad \text{and} \quad 
\Phi_\psi^x (\mathcal{L}) = \Psi_{\iota^\sharp \psi}^x (\mathcal{L})  \in \Z[A] ,
\]
where $\iota^\sharp \colon Z^{* +1}_Q(X;A) \to Z^*_Q(X;A)_X$ is 
the pull-back induced by the chain map $\iota$.

\section{Proof}\label{sec:proof}

We prove our main theorems (Theorem~\ref{thm:shadow-1} and ~\ref{thm:2-knot}),  
using a duality (Proposition~\ref{prop:dual}) for generalized quandle cocycle invariants.  

\subsection{Duality for invariants}
For a finite quandle $X$, 
let $Y,Z$ be two $X$-sets and 
$p \colon Y \to Z$ an $X$-map. 
Let $D$ be a diagram of an oriented classical or surface link.  
When $D$ represents the classical link (or surface link), 
%
let $\lambda$ be a $2$-cocycle (or $3$-cocycle) in $ Z^*_Q(X;A)_Z$. 

\begin{prop}\label{prop:dual}
We have 
\[
\Psi_{\lambda}^{p(y)} (D) = \Psi_{p^\sharp \lambda}^y (D) \in \Z[A]
\]
for any $y \in Y$, 
where $p^\sharp \colon Z^*_Q(X;A)_Z \to Z^*_Q(X;A)_Y$ is 
the pull-back induced by $p$. 
\end{prop}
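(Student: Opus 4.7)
The plan is to unpack both sides using the definition of $\Psi^{(-)}_{(-)}(D)$ from the previous subsection and then reduce to a term-by-term comparison of the summands, which are indexed by the same set of quandle homomorphisms $c \colon Q_D \to X$ on both sides. Writing the definitions out, the left-hand side equals
\[
\sum_{c \colon Q_D \to X} \left\langle s^{p(y)}_* \circ c_*[D],\ [\lambda] \right\rangle,
\]
while the right-hand side equals
\[
\sum_{c \colon Q_D \to X} \left\langle s^y_* \circ c_*[D],\ [p^\sharp \lambda] \right\rangle.
\]
So it suffices to show that the summand indexed by each fixed $c$ on the left agrees with the summand indexed by the same $c$ on the right.

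First, I invoke Lemma~\ref{lem:X-map}, which gives $s^{p(y)}_\sharp = p_\sharp \circ s^y_\sharp$ at the chain level; this descends to $s^{p(y)}_* = p_* \circ s^y_*$ on generalized quandle homology. Substituting into the left-hand summand rewrites it as $\langle p_* \circ s^y_* \circ c_*[D],\ [\lambda] \rangle$.

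Next, I use the adjointness (naturality) of the Kronecker product with respect to the chain map $p_\sharp$ and its dual cochain map $p^\sharp$: for any homology class $\alpha \in H_*^Q(X)_Y$ and any cocycle $\lambda \in Z_Q^*(X;A)_Z$, we have $\langle p_*(\alpha),\ [\lambda]\rangle = \langle \alpha,\ [p^\sharp \lambda]\rangle$. Applying this with $\alpha = s^y_* \circ c_*[D]$ immediately converts the left-hand summand into the right-hand summand, finishing the proof.

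There is really no serious obstacle here; the only thing to be mindful of is that $p_\sharp$ is well-defined on the quotient quandle complexes (i.e., it preserves the degenerate subcomplexes), but this is already built into the definition of $p_\sharp$ recorded in Subsection~\ref{subsec:induced}. Thus the proof is a direct two-step calculation: apply Lemma~\ref{lem:X-map} to split $s^{p(y)}_*$, then move $p_*$ across the Kronecker pairing as $p^\sharp$.
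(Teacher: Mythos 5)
Your proof is correct and follows exactly the paper's own argument: expand both sides via the definition of $\Psi$, apply Lemma~\ref{lem:X-map} to rewrite $s^{p(y)}_*$ as $p_* \circ s^y_*$, and then use the duality of the Kronecker product to move $p_*$ across the pairing as $p^\sharp$. The only cosmetic difference is that you write $[p^\sharp\lambda]$ where the paper writes $p^*[\lambda]$, which agree since $p^\sharp$ is a cochain map.
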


\begin{proof}
By a direct calculation, we have 

\begin{align*}
\Psi_{\lambda}^{p(y)} (D) 
& = \displaystyle\sum_c \langle s^{p(y)}_* \circ c_* [D], [\lambda] \rangle 
= \displaystyle\sum_c \langle (p_* \circ s^{y}_*) \circ c_* [D], [\lambda] \rangle \\
& = \displaystyle\sum_c \langle p_* \circ (s^{y}_* \circ c_*) [D], [\lambda] \rangle 
= \displaystyle\sum_c \langle s^{y}_* \circ c_* [D], p^* [\lambda] \rangle 
= \Psi_{p^\sharp \lambda}^y (D) , 
\end{align*}
where the fourth equality follows from the usual duality of the Kronecker product 
and the second equality follows from Lemma~\ref{lem:X-map}.
%
\end{proof}

\subsection{Proof of Theorem~\ref{thm:shadow-1}}

Let $X$ be a quandle. 
For the two $X$-sets, $\Z$ and $X$, 
we have an $X$-set $\Z \times X$; see Example~\ref{ex:X-set}.
Two maps, the projection $p \colon \Z \times X \to X$  
and the unique map $q \colon X \to \{y_0\}$, 
are $X$-maps; see Example~\ref{ex:X-map}.
Even if the composite
$$\sigma \circ \iota \colon  C_*^Q(X)_X \to  C_*^Q(X)$$
of the shifting chain map $\sigma$ and the chain map $\iota$
is not chain homotopic to the chain map 
$$q_\sharp \colon C_*^Q(X)_X \to C_*^Q(X)_{\{y_0\}} 
\bigl( = C_*^Q(X) \bigr),$$ 
we can show the following by composing the chain map 
$$p_\sharp \colon C_*^Q(X)_{\Z \times X} \to C_*^Q(X)_X . $$ 

\begin{prop}\label{prop:shadow-1}
The chain map $(\sigma \circ \iota) \circ p_\sharp$ is chain homotopic to 
$q_\sharp \circ p_\sharp$.
\end{prop}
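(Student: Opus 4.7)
The plan is to construct an explicit chain homotopy $H_n \colon C_n^Q(X)_{\Z \times X} \to C_{n+1}^Q(X)$ and verify the identity $\partial_{n+1} H_n + H_{n-1} \partial_n = \sigma_{n+1}\iota_n p_{\sharp,n} - q_{\sharp,n}p_{\sharp,n}$ directly on generators. First I would compute both sides explicitly. Using $\iota_n(x_0; x_1, \ldots, x_n) = (-1)^n(x_0, x_1, \ldots, x_n)$ and $\sigma_{n+1} = (-1)^{n+1}\partial_{n+1}^0$, the composition $\sigma_{n+1}\iota_n p_{\sharp,n}$ acts on $(m, x_0; x_1, \ldots, x_n)$ as $-\partial_{n+1}^0(x_0, x_1, \ldots, x_n)$. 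The ``delete position 1'' term of this face map is exactly $(x_1, \ldots, x_n) = q_{\sharp,n} p_{\sharp,n}(m, x_0; x_1, \ldots, x_n)$, so the difference between the two chain maps equals the partial face
\[
A_n(x_0; x_1, \ldots, x_n) := \sum_{i=1}^n (-1)^i (x_0, x_1, \ldots, \hat{x}_i, \ldots, x_n),
\]
which preserves $x_0$ in the first slot.

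Guided by the cases $n=0,1,2$, I would then propose the candidate
\[
H_n(m, x_0; x_1, \ldots, x_n) := -(m+n)\,(x_0, x_1, \ldots, x_n)
\]
for $n \geq 0$ (and $H_n = 0$ otherwise). Since this formula preserves the degenerate subcomplex, $H_n$ descends from the rack-type complex to the quandle chain complex. The verification $\partial_{n+1} H_n + H_{n-1} \partial_n = A_n$ then reduces to writing $\partial_{n+1}(x_0, x_1, \ldots, x_n) = B_n - A_n$, where $B_n$ is the quandle-twisted analogue of $A_n$ coming from $\partial_{n+1}^1$, and splitting $H_{n-1}\partial_n = H_{n-1}\partial_n^0 - H_{n-1}\partial_n^1$. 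The first piece evaluates to $-(m+n-1)\,A_n$ because $\partial_n^0$ leaves the $\Z \times X$ coordinate untouched, while the second evaluates to $-(m+n)\,B_n$ because $\partial_n^1$ shifts $(m, x_0)$ to $(m+1, x_0 * x_i)$ via the product $X$-action. Combined with $\partial_{n+1} H_n = (m+n)\,A_n - (m+n)\,B_n$, the $B_n$ terms cancel and the $A_n$ coefficients collapse to $[(m+n) - (m+n-1)]\,A_n = A_n$.

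The crucial point, and the reason the $\Z$-factor is indispensable, is this cancellation of the $B_n$ contributions. Any $m$-independent coefficient for $H_n$ would produce matching (non-cancelling) $B_n$ terms from $\partial_{n+1} H_n$ and $H_{n-1}\partial_n^1$; the $\Z$-action supplies exactly the shift $m \mapsto m+1$ needed in $H_{n-1}\partial_n^1$ to turn a matching coefficient into a cancelling one. This is also why $\sigma \circ \iota$ and $q_\sharp$ fail to be chain homotopic on $C_*^Q(X)_X$ alone. I expect the main obstacle to be pinpointing the coefficient formula $-(m+n)$; once it is in hand, the computation is a mechanical bookkeeping of $A_n$ and $B_n$ terms.
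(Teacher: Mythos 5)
Your proof is correct and takes essentially the same route as the paper: an explicit chain homotopy of the form $(m,x_0;\bm{x})\mapsto c\cdot(x_0,\bm{x})$ whose coefficient depends on the $\Z$-coordinate, with the $\Z$-action supplying exactly the shift needed to cancel the twisted ($\partial^1$-type) terms. The only difference is that your coefficient $-(m+n)$ produces a homotopy landing directly on $\sigma\circ\iota\circ p_\sharp$, whereas the paper uses the coefficient $m$ to reach $\tilde{\sigma}\circ\iota\circ p_\sharp$ and then appeals to the chain homotopy between $\sigma$ and $\tilde{\sigma}$; the two homotopies differ only by a multiple of the degree-raising chain map $(m,x_0;\bm{x})\mapsto(x_0,\bm{x})$, which contributes nothing to $\partial H + H\partial$.
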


\begin{proof}
Let $P_n \colon C_n^Q(X)_{\Z \times X} \to C_{n+1}^Q(X)$ 
be a homomorphism defined by 
$$P_n (a,x_0; \bm{x}) = a \cdot (x_0, \bm{x}) $$
for each generator $(a, x_0; \bm{x}) \in C_n^Q(X)_{\Z \times X}$. 
Direct computations show that 
\begin{align*}
P_{n-1} \circ \partial_n (a,x_0; \bm{x}) 
& = -a \cdot \partial_{n+1}(x_0, \bm{x}) + \partial_{n+1}^1 (x_0, \bm{x}) + (\bm{x}), \\
\partial_{n+1} \circ P_n (a,x_0; \bm{x})
& = a \cdot \partial_{n+1}(x_0, \bm{x}), \\
%
q_{\sharp n} \circ p_{\sharp n} (a,x_0; \bm{x}) 
& = q_{\sharp n}(x_0; \bm{x}) = (\bm{x}), 
\end{align*}
and 
$$(\tilde{\sigma} \circ \iota)_n \circ p_{\sharp n} (a,x_0; \bm{x}) 
= \tilde{\sigma}_{n+1} \circ \iota_n (x_0; \bm{x}) = - \partial_{n+1}^1 (x_0, \bm{x}).$$
Hence we have 
$$P_{n-1} \circ \partial_n + \partial_{n+1} \circ P_n
= q_{\sharp n} \circ p_{\sharp n} - (\tilde{\sigma} \circ \iota)_n \circ p_{\sharp n}, $$
and this implies that the set of maps $P_n$ is a chain homotopy between 
$(\sigma \circ \iota) \circ p_\sharp$ and $q_\sharp \circ p_\sharp$. 
\end{proof}

\begin{proof}[Proof of Theorem~\ref{thm:shadow-1}]
By a direct calculation, for any $m \in \Z$, we have 
\begin{align*}
\Phi_{\sigma^\sharp \phi}^x (L) 
& = \Psi_{\iota^\sharp (\sigma^\sharp \phi)}^x (L) 
= \Psi_{(\sigma \circ \iota)^\sharp \phi}^{p(m,x)} (L)  
= \Psi_{p^\sharp ( (\sigma \circ \iota)^\sharp \phi )}^{(m,x)} (L) 
= \Psi_{p^\sharp ( q^\sharp \phi )}^{(m,x)} (L) \\  
& = \Psi_{q^\sharp \phi}^{p(m,x)} (L) 
= \Psi_{q^\sharp \phi}^x (L) 
= \Psi_{\phi}^{q(x)} (L)
= \Psi_{\phi}^{y_0} (L)
= \Phi_{\phi} (L) ,
\end{align*}
where the third and seventh equalities follow from the duality (Proposition~\ref{prop:dual})
and the fourth equality follows from Proposition~\ref{prop:shadow-1}.  
\end{proof}

By the same argument, we can show 
the one-dimensional higher version of Theorem~\ref{thm:shadow-1}. 
Here we just state the result.  
\begin{theorem}\label{thm:shadow-2}
Let $\mathcal{L}$ be an oriented surface link in $\R^4$ and $\theta \colon X^3 \to A$ 
a $3$-cocycle for a quandle $X$ and an abelian group $A$. 
Then we have 
$$\Phi_{\sigma^\sharp \theta}^x (\mathcal{L}) 
= \Phi_\theta (\mathcal{L}) \in \Z[A]$$
for each $x \in X$.  
\end{theorem}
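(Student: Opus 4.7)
The plan is to run the proof of Theorem~\ref{thm:shadow-1} verbatim, replacing the $2$-cocycle $\phi$ and the classical link $L$ by the $3$-cocycle $\theta$ and the surface link $\mathcal{L}$. Two observations make this replay automatic. First, the duality of Proposition~\ref{prop:dual} holds at every cochain degree, so each invocation of it in the chain of equalities carries over. Second, the chain homotopy $P_n$ built in Proposition~\ref{prop:shadow-1} is defined by the same formula $P_n(a,x_0;\bm{x})=a\cdot(x_0,\bm{x})$ in every degree, so the relation $(\sigma\circ\iota)\circ p_\sharp \simeq q_\sharp \circ p_\sharp$ is available in the degree now required, namely the one in which $\theta$ pulls back to a cochain on $C_3^Q(X)_{\Z\times X}$.

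Concretely, I would fix any $m\in\Z$ so that $x=p(m,x)$ for the projection $p\colon \Z\times X \to X$ from Example~\ref{ex:X-map}, and then chain together
\[
\Phi_{\sigma^\sharp \theta}^x(\mathcal{L})
= \Psi_{\iota^\sharp\sigma^\sharp \theta}^x(\mathcal{L})
= \Psi_{(\sigma\circ\iota)^\sharp \theta}^{p(m,x)}(\mathcal{L})
= \Psi_{p^\sharp(\sigma\circ\iota)^\sharp \theta}^{(m,x)}(\mathcal{L})
\]
using the definition of the shadow cocycle invariant and Proposition~\ref{prop:dual}. Proposition~\ref{prop:shadow-1} then replaces this by $\Psi_{p^\sharp q^\sharp \theta}^{(m,x)}(\mathcal{L})$, since $\Psi$ depends only on the cohomology class of its cocycle argument. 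Two further applications of Proposition~\ref{prop:dual}, first along $p$ and then along the constant $X$-map $q\colon X \to \{y_0\}$, unwind this to $\Psi_{\theta}^{q(x)}(\mathcal{L})=\Psi_\theta^{y_0}(\mathcal{L})=\Phi_\theta(\mathcal{L})$, closing the chain.

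The only point meriting verification is that Proposition~\ref{prop:shadow-1} is genuinely degree-free. Its proof establishes the identity
\[
P_{n-1}\circ\partial_n + \partial_{n+1}\circ P_n
= q_{\sharp n}\circ p_{\sharp n} - (\tilde{\sigma}\circ\iota)_n\circ p_{\sharp n}
\]
by a direct computation valid for every $n$, and then invokes the chain homotopy $\sigma\simeq\tilde\sigma$ from Section~\ref{sec:shift}, which is itself uniform in $n$. Hence both inputs upgrade to the degree relevant for surface links without modification. No genuine obstacle arises; the main thing to be careful about is merely bookkeeping the degree shifts, since $\sigma^\sharp\theta$ is now a $4$-cocycle, $\iota^\sharp\sigma^\sharp\theta$ a $3$-cochain on $C_3^Q(X)_X$, and the fundamental class of $\mathcal{L}$ is $3$-dimensional, so all Kronecker pairings remain defined and the formal manipulation above closes exactly as in the classical case.
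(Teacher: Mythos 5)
Your proposal is correct and is exactly the argument the paper intends: the paper proves Theorem~\ref{thm:shadow-2} by declaring it follows ``by the same argument'' as Theorem~\ref{thm:shadow-1}, and you have carried out that replay, correctly observing that Proposition~\ref{prop:dual} and the chain homotopy of Proposition~\ref{prop:shadow-1} are uniform in degree. The degree bookkeeping at the end ($\sigma^\sharp\theta$ a $4$-cocycle, $\iota^\sharp\sigma^\sharp\theta$ a $3$-cocycle over the $X$-set $X$, paired with the $3$-dimensional fundamental class of $\mathcal{L}$) is also right.
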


\subsection{Proof of Theorem~\ref{thm:2-knot}}

Let $X$ be a quandle. For the $X$-set $\Z$, 
the unique map $q \colon \Z \to \{y_0\}$ 
is an $X$-map. 
Even if the shifting chain map 
$$\sigma \colon C_*^Q(X) \to C_{*-1}^Q(X)$$
itself is not null-homotopic, 
we can show the following by composing the chain map 
$$q_\sharp \colon C_*^Q(X)_\Z \to C_*^Q(X)_{\{y_0\}} \bigl( = C_*^Q(X) \bigr).$$

\begin{prop}\label{prop:2-knot}
The chain map $\sigma \circ q_\sharp$ is null-homotopic. 
\end{prop}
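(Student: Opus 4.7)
The plan is to exhibit a degree-zero chain homotopy $P_n\colon C_n^Q(X)_\Z\to C_n^Q(X)$ satisfying
$$\partial_n\circ P_n+P_{n-1}\circ\partial_n=\sigma_n\circ q_{\sharp n},$$
noting that the right-hand side sends $(a;\bm{x})$ to $(-1)^n\partial_n^0(\bm{x})$, independently of $a\in\Z$. Rather than attack this identity head-on, I would first invoke the chain homotopy between $\sigma$ and $\tilde{\sigma}$ constructed in Subsection~\ref{subsec:shift}; composing it with $q_\sharp$ gives $\sigma\circ q_\sharp\simeq\tilde{\sigma}\circ q_\sharp$, so it suffices to null-homotope $\tilde{\sigma}\circ q_\sharp$, whose value at $(a;\bm{x})$ is $(-1)^n\partial_n^1(\bm{x})$.

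For this reduced problem, the natural candidate is
$$P_n(a;x_1,\ldots,x_n):=(-1)^n\,a\cdot(x_1,\ldots,x_n)\in C_n^Q(X),$$
treating the $\Z$-coordinate as a scalar multiplier. The motivation is that the only way $a$ enters $\partial_n$ on the source is through the shift $a\mapsto a+1$ in the $\partial_n^1$-contribution (because every element of $X$ acts as $+1$ on the $X$-set $\Z$), so a coefficient linear in $a$ is essentially forced. A short direct calculation using $\partial_n=\partial_n^0-\partial_n^1$ then shows that the $\partial_n^0$-pieces of $\partial_n P_n$ and $P_{n-1}\partial_n$ cancel (both carry coefficient proportional to $a$), while the $\partial_n^1$-pieces combine via the telescoping coefficients $a$ and $a+1$ to produce exactly $(-1)^n\partial_n^1(\bm{x})$, as required. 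Well-definedness on the quandle chain complex is immediate since $P_n$ preserves degeneracies.

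The main conceptual hurdle is spotting the reduction to $\tilde{\sigma}\circ q_\sharp$ and writing down the right ansatz; once these are in hand, the bookkeeping is routine and parallels the verification used earlier in the paper for Proposition~\ref{prop:shadow-1}. Alternatively, absorbing the chain homotopy $(-1)^n n\cdot\mathrm{id}$ from Subsection~\ref{subsec:shift} directly into the formula gives the single candidate $P_n(a;\bm{x})=(-1)^n(a+n)(\bm{x})$ as an explicit null-homotopy of $\sigma\circ q_\sharp$ itself, with the verification identical in structure.
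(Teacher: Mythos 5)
Your proposal is correct and takes essentially the same route as the paper: the paper's proof uses exactly your candidate $P_n(a;\bm{x})=(-1)^n a\cdot(\bm{x})$ to verify $P_{n-1}\circ\partial_n+\partial_n\circ P_n=\tilde{\sigma}_n\circ q_{\sharp n}$ and then relies on the chain homotopy $\sigma\simeq\tilde{\sigma}$ from Subsection~\ref{subsec:shift}, just as you do. Your closing remark that $P_n(a;\bm{x})=(-1)^n(a+n)(\bm{x})$ null-homotopes $\sigma\circ q_\sharp$ directly is a correct minor variant not spelled out in the paper.
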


\begin{proof}
Let $P_n \colon C_n^Q(X)_\Z \to C_n^Q(X)$ be a homomorphism defined by 
$$P_n(a; \bm{x}) = (-1)^n a \cdot (\bm{x}) .$$
for each generator $(a; \bm{x}) \in C_n^Q(X)_{\Z}$.
Direct computations show that 
\begin{align*}
P_{n-1} \circ \partial_n (a; \bm{x}) 
& = (-1)^{n-1} a \cdot \partial_n(\bm{x}) + (-1)^{n} \partial_n^1(\bm{x}), \\
\partial_n \circ P_n (a; \bm{x}) 
& = (-1)^n a \cdot \partial_n (\bm{x}), \\
\end{align*}
and 
$$\tilde{\sigma}_n \circ q_{\sharp n} (a; \bm{x}) 
 = \tilde{\sigma}_n (\bm{x}) = (-1)^n \partial_n^1(\bm{x}) .$$
Hence we have 
$$P_{n-1} \circ \partial_n + \partial_n \circ P_n = \tilde{\sigma}_n \circ q_{\sharp n}, $$
and this implies that the set of maps $P_n$ is a chain homotopy between 
$\sigma \circ q_\sharp$ and the $0$-map. 
\end{proof}

\begin{proof}[Proof of Theorem~\ref{thm:2-knot}]
By a direct calculation, for any $m \in \Z$, we have 
\[
\Phi_{\sigma^\sharp \phi} (\mathcal{L}) 
= \Psi_{\sigma^\sharp \phi}^{y_0} (\mathcal{L}) 
= \Psi_{\sigma^\sharp \phi}^{q(m)} (\mathcal{L}) 
= \Psi_{q^\sharp (\sigma^\sharp \phi)}^m (\mathcal{L}) 
= \Psi_{\theta_{\mathrm{triv}}}^m (\mathcal{L}) 
= |\mathrm{Col}_X(\mathcal{L})| \cdot 0_A ,
\]
where $\theta_{\mathrm{triv}}$ is the trivial $3$-cocycle in $Z_Q^3(X;A)$, 
the third equality follows from the duality (Proposition~\ref{prop:dual}) and 
the fourth equality follows from Proposition~\ref{prop:2-knot}.  
\end{proof}

By the same argument, we can show 
the one-dimensional lower version of Theorem~\ref{thm:2-knot}. 
Here we just state the result.  
\begin{theorem}\label{thm:1-knot}
Let $L$ be an oriented classical link in $\R^3$ and $\kappa \colon X \to A$ 
a $1$-cocycle for a quandle $X$ and an abelian group $A$. 
Then $\Phi_{\sigma^\sharp \kappa} (L)$ is trivial, that is, 
$$\Phi_{\sigma^\sharp \kappa} (L) = |\mathrm{Col}_X(L)| \cdot 0_A \in \Z[A],$$
where 
$|\mathrm{Col}_X({L})|$ is the $X$-coloring number of ${L}$ 
and 
$0_A$ is the identity element of $A$. 
\end{theorem}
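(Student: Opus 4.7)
The plan is to mirror the proof of Theorem~\ref{thm:2-knot}, with all dimensions uniformly shifted down by one. Since $\sigma$ raises cocycle degree by one, a $1$-cocycle $\kappa \colon X \to A$ produces a $2$-cocycle $\sigma^\sharp \kappa$, so $\Phi_{\sigma^\sharp \kappa}(L)$ is an ordinary quandle $2$-cocycle invariant. I would route the computation through the $X$-set $\Z$ and the unique $X$-map $q \colon \Z \to \{y_0\}$, exactly as in the proof of Theorem~\ref{thm:2-knot}.

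Concretely, after writing $\Phi_{\sigma^\sharp \kappa}(L) = \Psi_{\sigma^\sharp \kappa}^{y_0}(L)$, I would fix any $m \in \Z$, use $q(m) = y_0$, and apply Proposition~\ref{prop:dual} along $q$ to obtain
\[
\Phi_{\sigma^\sharp \kappa}(L)
= \Psi_{\sigma^\sharp \kappa}^{q(m)}(L)
= \Psi_{q^\sharp (\sigma^\sharp \kappa)}^{m}(L)
= \Psi_{(\sigma \circ q_\sharp)^\sharp \kappa}^{m}(L),
\]
where the last equality is functoriality of pull-back. Proposition~\ref{prop:2-knot} asserts that $\sigma \circ q_\sharp \colon C_*^Q(X)_\Z \to C_{*-1}^Q(X)$ is null-homotopic, and its proof is degree-uniform (the homotopy formula $P_n(a;\bm{x}) = (-1)^n a \cdot (\bm{x})$ makes sense for every $n$), so the statement applies in the degree relevant here. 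Dualizing, $(\sigma \circ q_\sharp)^\sharp$ sends every cocycle to a coboundary, so $(\sigma \circ q_\sharp)^\sharp \kappa$ is a coboundary in $C^2_Q(X;A)_\Z$.

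To close the argument, I would invoke the general fact recalled right after the definition of the generalized quandle cocycle invariant: $\Psi_\lambda^y(L)$ depends only on the cohomology class $[\lambda]$, and equals $|\mathrm{Col}_X(L)| \cdot 0_A$ whenever $\lambda$ is a coboundary. Applied to $\lambda = (\sigma \circ q_\sharp)^\sharp \kappa$, this yields $\Phi_{\sigma^\sharp \kappa}(L) = |\mathrm{Col}_X(L)| \cdot 0_A$, as required. There is no real obstacle in this proof: both key ingredients (the duality of Proposition~\ref{prop:dual} and the null-homotopy of Proposition~\ref{prop:2-knot}) are already in place, and the only bookkeeping worth verifying is the low-degree specialization of the homotopy $P_n$, which is routine.
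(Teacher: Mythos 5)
Your proposal is correct and is exactly the argument the paper intends: the paper proves Theorem~\ref{thm:1-knot} by declaring it to be ``the same argument'' as Theorem~\ref{thm:2-knot}, namely the chain $\Phi_{\sigma^\sharp \kappa}(L) = \Psi_{\sigma^\sharp \kappa}^{q(m)}(L) = \Psi_{q^\sharp(\sigma^\sharp \kappa)}^{m}(L)$ via Proposition~\ref{prop:dual}, followed by the null-homotopy of $\sigma \circ q_\sharp$ from Proposition~\ref{prop:2-knot} (which, as you note, is degree-uniform) and the fact that $\Psi^y_\lambda$ is trivial on coboundaries. No gaps.
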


\section{Behavior of the shifting maps for (co)homology groups}\label{sec:behavior}

We study behavior of the shifting maps for low-dimensional (co)homology groups. 
First, we discuss behavior between $1$st and $2$nd (co)homology groups 
for connected quandles and the trivial quandle of order $2$. 
Second, we discuss behavior 
between low-dimensional (co)homology groups greater than one dimension 
for specific connected quandles;   
the dihedral quandles of odd prime orders and the tetrahedral quandle (of order $4$).

\subsection{Behavior between $1$st and $2$nd (co)homology groups}
Given a quandle $X$, 
it follows from (Q2) and (Q3) of the quandle axioms that the map $s_x \colon X \to X$ is 
an automorphism, called an \textit{inner automorphism}, of $X$ for each $x \in X$. 
The quandle $X$ is said to be \textit{connected} if the group generated by 
all inner automorphisms acts transitively on $X$. 
Then, for any $x \in X$, 
the homology class $[(x)]$ of $(x) \in Z^Q_1(X) = C^Q_1(X)$  
generates  $H^Q_1(X) \cong \Z$ 
and hence we have $H_Q^1(X;A) \cong \mathrm{Hom}(H^Q_1(X),A) \cong A$
for 
any abelian group $A$; see \cite{CJKS-Betti}.  
%
Behavior of the shifting map between 
$1$st and $2$nd (co)homology groups for connected quandles 
is the following.
\begin{theorem}
The shifting maps 
\[\sigma_* \colon H^Q_2(X) \to H^Q_1(X) 
\quad \text{and} \quad 
\sigma^* \colon H_Q^1(X;A) \to H_Q^2(X;A)\] 
are the $0$-maps for any connected quandle $X$ and abelian group $A$. 
\end{theorem}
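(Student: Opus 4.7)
The plan is to unwind the definition of $\sigma$ in the lowest nontrivial degree and use the fact that connectedness forces both $H^Q_1(X)$ and $Z^1_Q(X;A)$ to be essentially one-dimensional objects.

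First I would write down $\sigma_2$ explicitly: since $\sigma_n = (-1)^n\partial_n^0$, one has
\[
\sigma_2(x,y) = \partial_2^0(x,y) = (x) - (y) \in C^Q_1(X)
\]
for each $x,y \in X$. This is the key computation that drives everything.

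For the homology statement, I would take any $2$-cycle $c = \sum a_{x,y}(x,y) \in Z^Q_2(X)$ and observe that
\[
\sigma_2(c) = \sum a_{x,y}\bigl((x)-(y)\bigr).
\]
Since $X$ is connected, the recalled fact $H^Q_1(X) \cong \Z$ generated by $[(x)]$ for \emph{any} $x \in X$ gives $[(x)] = [(y)]$ in $H^Q_1(X)$ for all $x,y \in X$. Therefore $[\sigma_2(c)] = 0$, which proves that $\sigma_* \colon H^Q_2(X) \to H^Q_1(X)$ is the zero map. (Note that we do not even need $c$ to be a cycle; the conclusion holds on all of $C^Q_2(X)$ modulo $\partial_2 C^Q_3(X) + Z^Q_1(X)$-trivialities.)

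For the cohomology statement, I would argue directly at the cochain level rather than dualize. A $1$-cocycle is a map $\kappa \colon X \to A$ satisfying $\delta^1\kappa(x,y) = \kappa(x) - \kappa(x\ast y) = 0_A$ for all $x,y \in X$, that is, $\kappa$ is invariant under every inner automorphism $s_y$. By connectedness of $X$, the group generated by the $s_y$'s acts transitively, so $\kappa$ must be constant on $X$. Consequently
\[
(\sigma^\sharp \kappa)(x,y) = \kappa\bigl(\sigma_2(x,y)\bigr) = \kappa(x) - \kappa(y) = 0_A,
\]
so $\sigma^\sharp$ already vanishes on $Z^1_Q(X;A)$, and hence $\sigma^* \colon H^1_Q(X;A) \to H^2_Q(X;A)$ is the zero map. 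No step here is a genuine obstacle; the only thing to be careful about is resisting the temptation to invoke a universal-coefficient duality, which would introduce an unnecessary $\mathrm{Ext}$ term—the direct cochain-level argument is cleaner and actually proves a slightly stronger statement.
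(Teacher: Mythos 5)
Your proposal is correct and follows essentially the same argument as the paper: compute $\sigma_2(x,y)=(x)-(y)$, use connectedness to get $[(x)]=[(y)]$ in $H^Q_1(X)\cong\Z$ for the homology statement, and use connectedness to force every $1$-cocycle to be constant for the cohomology statement. The only difference is cosmetic (your remark that $\sigma^\sharp$ already vanishes on $Z^1_Q(X;A)$ at the cochain level is implicit in the paper's computation as well).
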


\begin{proof}
For any generator $(x,y)$ in $C^Q_2(X)$, 
we have 
\[
\sigma_2(x,y) = - (y) + (x) 
\quad \text{and} \quad 
\partial_2(x,y) = + (x) - (x*y) 
\]
in $C^Q_1(X)$ at the chain level.   
Since the connectedness of $X$ implies $[(x)] = [(y)] \in H^Q_1(X)$, 
we have $[\sigma_2(x,y)]=0\in H^Q_1(X)$, and hence $\sigma_*$ is the $0$-map. 
Since the connectedness of $X$ also implies that 
any $1$-cocycle  $\kappa \colon X \to A$ is a constant map, 
we have 
$\sigma^* \kappa (x,y) = \kappa(\sigma_2(x,y)) = -\kappa(y) + \kappa(x) = 0$, and hence 
$\sigma^*$ is the $0$-map. 
\end{proof}

Behavior 
for disconnected quandles is totally different.  
The \textit{trivial quandle} $T_2$ of order $2$ is defined to be a set $\{0,1\}$ with 
the binary operation $x*y=x$ for each $x,y \in \{0,1\}$. 
This is the most simplest disconnected quandle, and 
the set of $[(0)]$ and $[(1)]$ generates $H^Q_1(T_2) \cong  \Z^2$
and that 
the set of $[(0,1)]$ and $[(1,0)]$ generates $H^Q_2(T_2) \cong  \Z^2$; see  \cite{CJKS-Betti}. 
Then we have 
\[ H_Q^1(T_2;A) \cong \mathrm{Hom}(H^Q_1(T_2);A) \cong A^2
\ \text{and} \
H_Q^2(T_2;A) \cong \mathrm{Hom}(H^Q_2(T_2),A) \cong A^2 \] 
for any abelian group $A$. 
Since $\sigma_2(0,1) = - (1) + (0)$ and $\sigma_2(1,0) = - (0) + (1)$ at the chain level, 
we have the following. 

\begin{prop}
The shifting map
$\sigma_* \colon H^Q_2(T_2) \to H^Q_1(T_2)$ 
is a homorphism $\Z^2 \to \Z^2$ such that $(1,0) \mapsto (1,-1)$ and 
$(0,1) \mapsto (-1,1)$. 
Dually, for any abelian group $A$, 
the shifting map $\sigma^* \colon H_Q^1(X;A) \to H_Q^2(X;A)$ 
is a homorphism $A^2 \to A^2$ such that $(a,0) \mapsto (a,-a)$ and 
$(0,a) \mapsto (-a,a)$ for each $a \in A$. 
\end{prop}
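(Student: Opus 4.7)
The plan is to derive both halves of the proposition directly from the chain-level formula $\sigma_2(x,y) = -(y)+(x)$ recorded just before the statement, together with a short universal coefficient argument to pass from homology to cohomology.

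For the homology statement, I would first confirm that $(0,1)$ and $(1,0)$ really do represent a $\Z$-basis of $H_2^Q(T_2)$. Since $x \ast y = x$ on $T_2$, we have
\[
\partial_2^1(x,y) = -(y) + (x \ast y) = -(y) + (x) = \partial_2^0(x,y),
\]
so $\partial_2 = \partial_2^0 - \partial_2^1$ vanishes identically on $C_2^Q(T_2)$ and these generators are automatically cycles. I would then read off $\sigma_*$ on them using the chain-level formula:
\[
\sigma_*[(0,1)] = [(0)] - [(1)], \qquad \sigma_*[(1,0)] = [(1)] - [(0)],
\]
whose coordinates with respect to the ordered basis $\{[(0)], [(1)]\}$ of $H_1^Q(T_2)$ are $(1,-1)$ and $(-1,1)$, which is exactly the claim.

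For the cohomology statement, my plan is to invoke the universal coefficient theorem: since $H_1^Q(T_2)$ and $H_2^Q(T_2)$ are both free abelian, there are natural identifications $H_Q^n(T_2;A) \cong \mathrm{Hom}(H_n^Q(T_2), A) \cong A^2$ under which $\sigma^*$ corresponds to the transpose of $\sigma_*$. Since the matrix $\bigl(\begin{smallmatrix} 1 & -1 \\ -1 & 1 \end{smallmatrix}\bigr)$ is symmetric, transposing returns the same map and yields the stated formulas $(a,0) \mapsto (a,-a)$ and $(0,a) \mapsto (-a,a)$. Alternatively, one can evaluate $\sigma^\sharp \kappa(x,y) = \kappa(x) - \kappa(y)$ on the generators $(0,1)$ and $(1,0)$ directly and read off the same output.

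There is no substantive obstacle to the argument; the whole verification reduces to linear algebra once the chain-level formula is in hand, and the only real pitfall is keeping track of the basis ordering so that the four sign entries land in the correct positions.
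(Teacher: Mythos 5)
Your argument is correct and follows essentially the same route as the paper, which simply records the chain-level identities $\sigma_2(0,1) = -(1)+(0)$ and $\sigma_2(1,0)=-(0)+(1)$ and reads off the matrix against the bases $\{[(0)],[(1)]\}$ and $\{[(0,1)],[(1,0)]\}$ taken from the cited computation of $H^Q_*(T_2)$. Your added observations --- that $\partial_2^0=\partial_2^1$ forces $\partial_2=0$ so the generators are genuinely cycles, and that the cohomological half is the transpose of a symmetric matrix under the $\mathrm{Hom}$ identification (or equivalently the direct evaluation $\sigma^\sharp\kappa(x,y)=\kappa(x)-\kappa(y)$) --- are exactly the details the paper leaves implicit.
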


\subsection{Dihedral quandle of odd prime order}
The \textit{dihedral quandle} $R_n$ of order $n$ is defined to be $\Z_n$ 
with the binary operation $x * y = 2x - y$ for each $x,y \in R_n$, 
where $\Z_n$ denotes the cyclic group of order $n$. 
The quandle $R_n$ is connected if and only if the number $n$ is odd. 
Let $p$ be an odd prime throughout in this subsection. 
It is known that 
\[
H^Q_2(R_p) \cong 0,\ H^Q_3(R_p) \cong \Z_p,\ H^Q_4(R_p) \cong \Z_p 
\]
and 
\[
H_Q^2(R_p;\Z_p) \cong 0,\ 
H_Q^3(R_p;\Z_p) \cong \Z_p,\ H_Q^4(R_p;\Z_p) \cong \Z_p^2
\]
for any odd prime $p$; see \cite{Moc03} for the $2$nd 
(co)homology group and $3$rd cohomology group, 
\cite{NP} for the $3$rd homology group, and 
\cite{Cla,Nos13-TAMS} for the $4$th (co)homology group.  
The first non-trivial shifting maps for (co)homology groups of $R_p$ might be 
\[
\sigma_* \colon H^Q_4(R_p) \to H^Q_3(R_p)
\quad \text{and} \quad  
\sigma^* \colon H_Q^3(R_p;\Z_p) \to H_Q^4(R_p;\Z_p), 
\]
which we are going to study. 

Let $z, w$ be cycles in $Z^Q_*(R_p)$ defined by 
\begin{align*}
%
z &= \sum_{i=1}^{p-2} \Bigl( (0,i,i+1) - (i,i+1,0) + (0,i+1,i) - (i+1,i,0) \Bigr) & \in Z^Q_3(R_p), \\
%
w &= -\sum_{i=1}^{p-2} \Bigl( (0,i,i+1,0) + (0,i+1,i,0) \Bigr) & \in Z^Q_4(R_p), 
\end{align*}
where these two cycles are obtained from 
a diagram of the $2$-twist spun $(2,p)$-torus knot with a (shadow) $R_p$-coloring. 
Let ${\chi} \colon R_p^2 \to \Z_p$ be a $2$-cochain in $C_Q^2(R_p;\Z_p)$ defined by 
\[
{\chi}(x,y) := \dfrac{(2y-x)^p + x^p - 2y^p}{p} \ \Biggl( \equiv
\sum_{i=1}^{p-1} i^{-1} (-x)^i (2y)^{p-i} \pmod p \Biggr) .
\]
Using the $2$-cochain ${\chi}$, we define 
a $3$-cocycle $\theta \in Z_Q^3(R_p;\Z_p)$, known as Mochizuki's $3$-cocycle \cite{Moc03},  
and $4$-cocycles $\psi_0, \psi_1 \in Z_Q^4(R_p;\Z_p)$ 
given in \cite[Example 5.9]{Nos13-TAMS}\footnote{
There is a minor typo: the term \lq\lq $(2z-w)$\rq\rq\ 
should be replaced by \lq\lq $(2w-z)$\rq\rq\ in his formula of $\psi_1$.
We note that the symbols $\psi_{4,0}$ and $\psi_{4,1}$ are used in \cite{Nos13-TAMS} 
instead of $\psi_0$ and $\psi_1$. 
}\ 
by 
\begin{align*}
\theta(x,y,z) &= (x-y) \cdot {\chi}(y,z), \\
\psi_0(x,y,z,w) &= - \theta(x-w,y-w,z-w), \\ 
\psi_1(x,y,z,w) &= {\chi}(z-x, y-x) \cdot {\chi}(z,w), 
\end{align*}
where this explicit expression for $\theta$ was that simplified in \cite{AS}.  
Then it is proved in \cite{Moc03} that 
the cohomology class $[\theta]$ generates $H_Q^3(R_p;\Z_p) \cong \Z_p$, 
and in \cite{Nos13-TAMS} that 
the set of the cohomology classes $[\psi_0]$ and $[\psi_1]$ 
generates $H_Q^4(R_p;\Z_p) \cong \Z_p^2$. 
By a direct computation, we have $\theta(z) = -2$, $\psi_0(w) = -\theta(-z) = -2$, 
and hence the following.

\begin{prop}
The homology class $[z]$ is a generator of $H^Q_3(R_p) \cong \Z_p$ and 
$[w]$ is that of $H^Q_4(R_p) \cong \Z_p$. 
\end{prop}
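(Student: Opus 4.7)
The plan is to detect $[z]$ and $[w]$ via the Kronecker pairing against the known generators $[\theta] \in H_Q^3(R_p;\Z_p)$ (Mochizuki's $3$-cocycle) and $[\psi_0] \in H_Q^4(R_p;\Z_p)$. Since $H_3^Q(R_p)$ and $H_4^Q(R_p)$ are both isomorphic to the cyclic group $\Z_p$ of prime order, any non-zero class is automatically a generator; and by the universal coefficient theorem, non-vanishing of the Kronecker pairing into $\Z_p$ suffices to detect non-vanishing of the underlying integral homology class.

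For $[z]$, I would evaluate $\theta(z)$ term by term. Using $\theta(x,y,z) = (x-y)\,\chi(y,z)$, two of the four families of summands appearing in $z$, namely those of the form $(i,i+1,0)$ and $(i+1,i,0)$, contribute $-\chi(i+1,0)$ and $\chi(i,0)$ respectively. The defining sum for $\chi$ immediately yields $\chi(a,0) \equiv \sum_{j=1}^{p-1} j^{-1}(-a)^j\, 0^{\,p-j} \equiv 0 \pmod{p}$, so these two families vanish. The remaining part reduces to a sum in $i$ involving $i\,\chi(i,i+1)$ and $(i+1)\,\chi(i+1,i)$. Expanding $\chi$ through its explicit $\pmod p$ formula and performing the standard Fermat-type manipulations (the $\chi$-values encode a slice of $(a+b)^p$ mod $p$), the sum should collapse to $\theta(z) \equiv -2 \pmod{p}$.

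For $[w]$, the $4$-cocycle $\psi_0$ takes a particularly simple form on chains whose last coordinate is zero: from $\psi_0(x,y,z,w) = -\theta(x-w, y-w, z-w)$ one reads off $\psi_0(a,b,c,0) = -\theta(a,b,c)$. Applying this to each term of $w$ and tracking the overall sign gives
\[
\psi_0(w) \;=\; \sum_{i=1}^{p-2} \bigl[\theta(0,i,i+1) + \theta(0,i+1,i)\bigr],
\]
which, thanks to the vanishing $\chi(\cdot,0)=0$ already noted, coincides with $\theta(z)$. Hence $\psi_0(w) \equiv -2 \pmod p$ as well (this is the content of the identity $\psi_0(w)=-\theta(-z)$ stated in the excerpt).

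Since $p$ is an odd prime, $-2 \not\equiv 0 \pmod{p}$, and therefore $\langle[z],[\theta]\rangle \neq 0$ and $\langle[w],[\psi_0]\rangle \neq 0$ in $\Z_p$. This forces $[z] \neq 0 \in H_3^Q(R_p)$ and $[w] \neq 0 \in H_4^Q(R_p)$, whence both classes are generators of their respective cyclic groups of order $p$. The main obstacle is the elementary but delicate combinatorial verification that $\theta(z) \equiv -2 \pmod p$; once this identity is in hand, the conclusion for $[w]$ is essentially free, and the rest of the argument is formal.
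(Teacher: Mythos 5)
Your proposal is correct and follows essentially the same route as the paper, whose proof is exactly the remark preceding the proposition: by direct computation $\theta(z)=-2$ and $\psi_0(w)=-\theta(-z)=-2$, and since $-2\not\equiv 0 \pmod p$ for an odd prime $p$, both classes are non-zero and hence generate the cyclic groups $H^Q_3(R_p)\cong\Z_p$ and $H^Q_4(R_p)\cong\Z_p$. Your additional observations (the vanishing $\chi(\cdot,0)=0$, the reduction $\psi_0(a,b,c,0)=-\theta(a,b,c)$, and the detection of non-triviality via the Kronecker pairing) are all sound and simply make explicit what the paper leaves as ``a direct computation.''
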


\begin{remark}
Let $z_0 = \sum_{i=1}^{p-1} (0,i,i+1)$ be a $3$-cycle in $C^Q_3(R_p)$, 
where this cycle is obtained from a diagram of the $(2,p)$-torus knot with a shadow $R_p$-coloring. 
It is known that $[z_0]$ is also a generator of $H^Q_3(R_p) \cong \Z_p$ and $[z] = 2[z_0]$, 
since $\theta(z_0)=-1$.
\end{remark}

\begin{theorem}
The map $\sigma_* \colon H^Q_4(R_p) \to H^Q_3(R_p)$ 
is an isomorphism $\Z_p \to \Z_p$ such that $1 \mapsto -1$. 
\end{theorem}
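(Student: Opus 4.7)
The plan is a direct chain-level computation in $C_*^Q(R_p)$. By the preceding proposition, $[w]$ generates $H_4^Q(R_p) \cong \Z_p$ and $[z]$ generates $H_3^Q(R_p) \cong \Z_p$, so proving the theorem reduces to showing $\sigma_*[w] = -[z]$. I will in fact establish the stronger chain-level identity $\sigma_4(w) = -z$ in $C_3^Q(R_p)$, which immediately implies the homology statement; since $-1$ is a unit in $\Z_p$, the induced homomorphism $\Z_p \to \Z_p$ given by $1 \mapsto -1$ is automatically an isomorphism.

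Since $\sigma_4 = (-1)^4 \partial_4^0 = \partial_4^0$, I expand each summand of $w$ via the face-omission formula. For each $i$ with $1 \le i \le p-2$ one computes
\begin{align*}
\partial_4^0(0,i,i+1,0) &= (0,i,i+1) - (0,i,0) + (0,i+1,0) - (i,i+1,0), \\
\partial_4^0(0,i+1,i,0) &= (0,i+1,i) - (0,i+1,0) + (0,i,0) - (i+1,i,0).
\end{align*}
Adding these, the two triples $(0,i,0)$ and $(0,i+1,0)$ cancel already in $C_3^R(R_p)$, and the four remaining triples are exactly the $i$-th summand of $z$. Summing over $i$ and carrying the overall minus sign of $w$ through gives $\sigma_4(w) = -z$, as required.

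The only point needing care is ensuring that no unintended simplifications occur when passing to the quandle quotient $C_3^Q(R_p)$. For $1 \le i \le p-2$ we have $i,\, i+1 \in \{1,\ldots,p-1\}$ with $i \neq i+1$, so every 3-tuple appearing above has distinct consecutive entries and therefore lies outside $C_3^D(R_p)$; in particular the cancellation of $(0,i,0)$ and $(0,i+1,0)$ is a genuine cancellation in $C_3^R(R_p)$ rather than an artifact of the quotient. There is consequently no substantive obstacle beyond the bookkeeping of signs.
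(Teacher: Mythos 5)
Your proposal is correct and takes essentially the same approach as the paper, whose entire proof is the one-line assertion that $\sigma(w) = -z$ at the chain level; you have simply written out the direct computation of $\partial_4^0$ on the summands of $w$ (including the cancellation of the $(0,i,0)$ and $(0,i+1,0)$ terms) that the paper leaves implicit. The appeal to the preceding proposition identifying $[w]$ and $[z]$ as generators, and the observation that multiplication by $-1$ is an isomorphism of $\Z_p$, complete the argument exactly as intended.
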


\begin{proof}
By a direct computation, we have $\sigma(w) = - z$ 
at the chain level. 
\end{proof}

\begin{prop}\label{prop:R_p}
The map $\sigma^* \colon H_Q^3(R_p;\Z_p) \to H_Q^4(R_p;\Z_p)$ 
is an injective homomorphism $\Z_p \to \Z_p^2$ 
such that $1$ $\mapsto$ $(-1,n)$ for some $n \in \Z_p$.
\end{prop}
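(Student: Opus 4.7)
The plan is to determine the image of the generator $[\theta] \in H_Q^3(R_p;\Z_p) \cong \Z_p$ under $\sigma^*$, expressed in the basis $\{[\psi_0], [\psi_1]\}$ of $H_Q^4(R_p;\Z_p) \cong \Z_p^2$. Writing $\sigma^*[\theta] = a[\psi_0] + b[\psi_1]$ with $a, b \in \Z_p$, the task reduces to pinning down $a$ and $b$ by evaluating against the $4$-cycle $[w]$ introduced above.

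First, I would use the duality of the Kronecker pairing together with the chain-level identity $\sigma_4(w) = -z$ established in the preceding theorem to compute
\[
\langle [w], \sigma^*[\theta] \rangle = \langle \sigma_*[w], [\theta] \rangle = \langle -[z], [\theta] \rangle = -\theta(z) = 2,
\]
using the already recorded value $\theta(z) = -2$. On the other hand, expanding $\sigma^*[\theta] = a[\psi_0] + b[\psi_1]$ and recalling $\psi_0(w) = -2$ yields
\[
\langle [w], \sigma^*[\theta] \rangle = a \cdot \psi_0(w) + b \cdot \psi_1(w) = -2a + b \cdot \psi_1(w).
\]

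The next step is to show $\psi_1(w) = 0$. Every generator $(x, y, z, t)$ appearing in $w$ satisfies $t = 0$, so $\psi_1$ evaluated on it equals $\chi(z-x, y-x) \cdot \chi(z, 0)$. A short computation gives
\[
\chi(k, 0) = \frac{(-k)^p + k^p}{p} = 0
\]
for every $k \in R_p$, since $p$ is odd and hence $(-k)^p = -k^p$ in $\Z$. Consequently $\psi_1(w) = 0$, and the previous display reduces to $-2a = 2$ in $\Z_p$, so $a = -1$.

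The Kronecker pairing with $[w]$ carries no information about $b$: by the universal coefficient theorem, $H_Q^4(R_p;\Z_p) \cong \mathrm{Hom}(H^Q_4(R_p),\Z_p) \oplus \mathrm{Ext}(H^Q_3(R_p),\Z_p)$, and $[\psi_1]$ represents the $\mathrm{Ext}$-summand, which pairs trivially with every cycle. Thus we can only conclude $b = n$ for some unspecified $n \in \Z_p$, yielding $\sigma^*[\theta] = -[\psi_0] + n[\psi_1]$ and the image $(-1, n)$ as claimed. Injectivity then follows at once, since the source has prime order $p$ and the image is nonzero in $\Z_p^2$ because $-1 \neq 0$ in $\Z_p$. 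The main obstacle is the verification $\psi_1(w) = 0$; although elementary, it is precisely what disentangles the $[\psi_0]$- and $[\psi_1]$-coordinates and allows the first one to be pinned down. Identifying $n$ explicitly would require additional input beyond pairing with $[w]$, for instance a direct cocycle-level analysis realizing $\sigma^\sharp \theta + \psi_0$ as $n \cdot \psi_1$ modulo a coboundary.
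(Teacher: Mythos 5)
Your argument is correct and is essentially the paper's own proof: both evaluate $\sigma^\sharp\theta$ on the $4$-cycle $w$ via $\sigma(w)=-z$ and $\theta(z)=-2$, use $\chi(\cdot,0)=0$ to get $\psi_1(w)=0$, and compare with $\psi_0(w)=-2$ to pin down the $[\psi_0]$-coefficient as $-1$, leaving the $[\psi_1]$-coefficient undetermined. Your added explanation of \emph{why} the pairing with $[w]$ cannot detect the $[\psi_1]$-coordinate (it spans the $\mathrm{Ext}$ summand) is a nice gloss but not a departure from the paper's route.
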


\begin{proof}
By a direct computation, 
we have $\sigma^\sharp(\theta) (w) = \theta(\sigma(w)) = \theta(-z) = 2$. 
Using $\chi (x,0) = 0 \in \Z_p$ for any $x \in R_p$, we have $\psi_1(w) = 0$. 
Combined these with $\psi_0(w) = -2$, 
we have $\sigma^* ([\theta]) = -[\psi_0] + n [\psi_1]$ for some $n \in \Z_p$. 
\end{proof}

Katsumi Ishikawa kindly computed $\sigma^*$ 
by his computer and verified that $n = 0 \in \Z_p$ for $p = 3,5,7,11,13$.  
Thus we propose the following (cf. Theorem~\ref{thm:S_4}). 

\begin{conjecture}
The map $\sigma^* \colon H_Q^3(R_p;\Z_p) \to H_Q^4(R_p;\Z_p)$ 
is an injective homomorphism $\Z_p \to \Z_p^2$ 
such that $1$ $\mapsto$ $(-1,0)$.
\end{conjecture}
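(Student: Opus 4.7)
The plan is to pin down the coefficient $n \in \Z_p$ in Proposition~\ref{prop:R_p} by proving that $\sigma^\sharp(\theta)+\psi_0$ is a coboundary in $C_Q^*(R_p;\Z_p)$, which would yield $\sigma^*([\theta])=-[\psi_0]$ and hence $n=0$. My proposal is to construct an explicit $3$-cochain $\eta\in C_Q^3(R_p;\Z_p)$ whose coboundary $\delta\eta$ realizes this difference.

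First I would make $\sigma^\sharp(\theta)$ totally explicit. Using $\sigma_4 = (-1)^4\partial_4^0 = \partial_4^0$ together with Mochizuki's formula $\theta(x,y,z)=(x-y)\chi(y,z)$, a direct expansion gives
\[
\sigma^\sharp(\theta)(x,y,z,w)=(x-y)\bigl[\chi(y,z)-\chi(y,w)+\chi(z,w)\bigr].
\]
Combined with $\psi_0(x,y,z,w)=-(x-y)\chi(y-w,z-w)$, one sees that $\sigma^\sharp(\theta)+\psi_0$ again factors through $(x-y)$, equalling $(x-y)\cdot F(y,z,w)$ for an explicit three-variable function $F$ built from values of $\chi$. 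I would then seek $\eta$ of the matching form $\eta(x,y,z)=(x-y)\cdot G(y,z)$ (or a small sum of such terms), reducing the equation $\delta\eta=\sigma^\sharp(\theta)+\psi_0$ to a combinatorial identity in the values of $\chi$ at arguments related by the quandle operation $a*b=2b-a$ on $R_p$. Such an identity should ultimately follow from the defining $p$-adic expansion $(2y-x)^p=\sum_{i=0}^{p}\binom{p}{i}(2y)^{p-i}(-x)^i$ underlying $\chi$, which is exactly the input that makes the cocycles $\theta,\psi_0,\psi_1$ work in the first place.

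The main obstacle is that $[\psi_1]$ lies in the $\mathrm{Ext}(H_3^Q(R_p),\Z_p)$ summand of $H_Q^4(R_p;\Z_p)$ supplied by the universal coefficient theorem, whereas $[\psi_0]$ lives in the $\mathrm{Hom}(H_4^Q(R_p),\Z_p)$ summand; the cycle $w$ from Proposition~\ref{prop:R_p} spans the free part $H_4^Q(R_p)\cong\Z_p$ but pairs trivially with $\psi_1$. Consequently no Kronecker pairing with an integer $4$-cycle can separate $n\cdot[\psi_1]$ from $0$, and the explicit-coboundary route above is essentially the only direct path. If the direct construction of $\eta$ resists, a structurally cleaner alternative is to lift the computation to $\Z_{p^2}$-coefficients, compute $\sigma^*$ on $H_Q^*(R_p;\Z_{p^2})$, and exploit the Bockstein homomorphism associated to $0\to\Z_p\to\Z_{p^2}\to\Z_p\to 0$ (whose image is generated, up to scalar, by $[\psi_1]$) together with the naturality of $\sigma^*$ under change of coefficients.

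In either form, the hard part will be producing a uniform identity on $\chi$ valid for every odd prime $p$. Ishikawa's verification for $p\le 13$ strongly suggests such a universal polynomial-in-$p$ formula for $\eta$ exists; locating it combinatorially, rather than just case by case, is the remaining technical challenge, and the one I would expect to consume most of the work.
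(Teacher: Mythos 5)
Be aware that the statement you are addressing is presented in the paper as a \emph{conjecture}: the authors do not prove it, and the only evidence they offer is Ishikawa's machine verification that $n=0$ for $p=3,5,7,11,13$. What the paper actually proves (Proposition~\ref{prop:R_p}) is that $\sigma^*([\theta]) = -[\psi_0] + n[\psi_1]$ for \emph{some} $n \in \Z_p$, obtained by pairing with the $4$-cycle $w$ and using $\theta(\sigma(w))=2$, $\psi_0(w)=-2$ and $\psi_1(w)=0$. Your preliminary computations are correct and consistent with this: the expansion $\sigma^\sharp\theta(x,y,z,w) = (x-y)\bigl[\chi(y,z)-\chi(y,w)+\chi(z,w)\bigr]$ does follow from $\sigma_4=\partial_4^0$ and Mochizuki's formula, the difference $\sigma^\sharp\theta+\psi_0$ does factor as $(x-y)\cdot F(y,z,w)$, and your observation that $[\psi_1]$ pairs trivially with all of $H^Q_4(R_p)$ correctly identifies why the Kronecker-pairing method cannot determine $n$ and why the paper stalls at Proposition~\ref{prop:R_p}.

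However, your proposal does not close the gap. The entire content of the conjecture beyond Proposition~\ref{prop:R_p} is the single claim $n=0$, i.e.\ that $\sigma^\sharp\theta + \psi_0$ is a coboundary, and you explicitly defer the construction of the required $3$-cochain $\eta$ (equivalently, the uniform identity on $\chi$ valid for every odd prime $p$) as ``the remaining technical challenge.'' The Bockstein alternative is likewise only sketched: to use it you would need to establish that $[\psi_1]$ is, up to scalar, in the image of the Bockstein associated to $0\to\Z_p\to\Z_{p^2}\to\Z_p\to 0$ and that the lifted computation over $\Z_{p^2}$ is actually tractable, neither of which is carried out. As submitted, this is a research plan whose unexecuted step coincides exactly with the open problem; it recovers what the paper already proves but should not be presented as a proof of the conjecture.
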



\subsection{Tetrahedral quandle}
The \textit{tetrahedral quandle} $S_4$ is defined to be 
the set of vertices of a regular tetrahedron, denoted by  $\{0,1,2,3\}$, 
with the binary operation 
\[\begin{array}{ccccccccc}
0*0 &=& 1*2 &=& 2*3 &=& 3*1 &=& 0,  \\
0*3 &=& 1*1 &=& 2*0 &=& 3*2 &=& 1,  \\
0*1 &=& 1*3 &=& 2*2 &=& 3*0 &=& 2,  \\
0*2 &=& 1*0 &=& 2*1 &=& 3*3 &=& 3. 
\end{array}\]
The inner automorphism $s_x \colon S_4 \to S_4$ for $x \in S_4$ 
can be regarded as a counterclockwise rotation of the tetrahedron by 
the angle $2 \pi/3$ looking at the bottom face from the vertex $x$. 
This quandle $S_4$ is known to be connected. 
%
It is known in \cite{CJKLS,LN} that 
\[
H^Q_2(S_4) \cong \Z_2,\ H^Q_3(S_4) \cong \Z_2 \oplus \Z_4 
\]
and 
\[
H_Q^2(S_4;\Z_4) \cong \Z_2,\  
H_Q^3(S_4;\Z_4) \cong \Z_2^2 \oplus \Z_4 ( = (\Z_2 \oplus \Z_4) \oplus \Z_2 ),
\]
where $H_Q^3(S_4;\Z_4) \cong \mathrm{Hom}(H^Q_3(S_4),\Z_4) \oplus \mathrm{Ext}(H^Q_2(S_4),\Z_4) 
\cong (\Z_2 \oplus \Z_4) \oplus \Z_2$. 
The first non-trivial shifting maps for (co)homology groups of $S_4$ might be 
\[
\sigma_* \colon H^Q_3(S_4) \to H^Q_2(S_4)
\quad \text{and} \quad  
\sigma^* \colon H_Q^2(S_4;\Z_4) \to H_Q^3(S_4;\Z_4), 
\]
which we are going to study.

Let $z_1,z_2,w_1,w_2$ be cycles in $Z^Q_*(S_4)$ defined by 
\begin{align*}
z_1 &= +(0,3) + (3,1) + (1,0) & \in Z^Q_2(S_4), \\
z_2 &= +(0,3) - (0,1) + (3,0) - (3,2) & \in Z^Q_2(S_4), \\
w_1 &= +(0,3,1) + (0,1,0) & \in Z^Q_3(S_4), \\
w_2 &= - (1,0,1) + (0,3,0) - (0,3,2) & \in Z^Q_3(S_4), 
\end{align*}
where cycles $z_1,w_1$ and $z_2,w_2$ are obtained from 
diagrams of the trefoil knot and the figure-eight knot with (shadow) $S_4$-colorings, 
respectively. 
Given an element $\bm{x} \in S_4^n$, let $\chi_{\bm{x}} \colon S_4^n \to \Z_4$ be 
an $n$-cochain in $C_Q^n(S_4;\Z_4)$ 
defined by 
\[\chi_{\bm{x}} (\bm{y}) = \begin{cases}1 & (\bm{y}=\bm{x}) \\ 0 & (\bm{y} \neq \bm{x})\end{cases}\]  
for each $\bm{y} \in S_4^n$. 
Using the above $n$-cochains, we define  
a $2$-cocycle ${\phi} \in Z_Q^2(S_4;\Z_4)$ and 
$3$-cocycles  ${\eta}_1, \eta_2, \eta_{11} \in Z_Q^3(S_4;\Z_4)$
by 
\begin{align*}
{\phi} 
&=
2(+\chi_{(0,1)}+\chi_{(0,2)}+\chi_{(1,0)}+\chi_{(1,2)}
+\chi_{(2,0)}+\chi_{(2,1)}), \\
{\eta}_{1} 
&=
2( +\chi_{(0,1,0)}+\chi_{(0,2,1)}+\chi_{(0,2,3)}+\chi_{(0,3,0)}
+\chi_{(0,3,1)}+\chi_{(0,3,2)}+\chi_{(1,0,1)} \\ 
&\phantom{=}
+\chi_{(1,0,3)}
+\chi_{(1,2,0)}+\chi_{(1,3,1)}+\chi_{(2,0,3)}+\chi_{(2,1,0)}
+\chi_{(2,1,3)}+\chi_{(2,3,2)} ), \\
\eta_{2}&=
+\chi_{(0,1,2)}-\chi_{(0,1,3)}-\chi_{(0,2,1)}+\chi_{(0,3,0)}
+\chi_{(0,3,1)}-\chi_{(0,3,2)}+2\chi_{(1,0,1)} \\ 
&\phantom{=}
+\chi_{(1,0,2)} 
+\chi_{(1,0,3)}-\chi_{(1,2,0)} 
+\chi_{(1,3,2)}+\chi_{(2,0,1)}
+\chi_{(2,0,2)}+\chi_{(2,0,3)} \\ 
&\phantom{=}
+\chi_{(2,1,3)}+\chi_{(3,0,1)}
+\chi_{(3,0,2)}+\chi_{(3,0,3)}+\chi_{(3,1,3)}, \\	
\eta_{11}&=
-\chi_{(0,1,0)}-\chi_{(0,1,3)}+\chi_{(0,3,1)}
+\chi_{(0,3,2)}-\chi_{(1,0,1)}-\chi_{(1,0,2)}-\chi_{(1,0,3)} \\ 
&\phantom{=}
+\chi_{(1,2,0)}-\chi_{(1,2,1)}+\chi_{(1,3,0)}+\chi_{(1,3,1)}
+\chi_{(1,3,2)}+\chi_{(2,0,3)}-\chi_{(2,1,0)} \\ 
&\phantom{=}
-\chi_{(3,0,2)}+\chi_{(3,2,3)},  
\end{align*}
where the $2$-cocycle and $3$-cocycles were given in the proofs 
of Lemma 6.8 and 6.12, respectively, in the $2$nd version of pre-publication 
paper\footnote{
Not only the proofs, but also the lemmas themselves have been  
deleted in the published version. 
We note that our $\phi \colon S_4^2 \to \Z_4$ is twice their 
$\phi \colon S_4^2 \to \Z_2$, and 
that our $\eta_1 \colon S_4^3 \to \Z_4$ is twice their 
$\eta_1 \colon S_4^3 \to \Z_2$. 
}\ 
of \cite{CJKLS} on the arXiv. 
The explicit expressions for these cocycles 
are described in \cite[Example 8.11]{CJKLS} 
and \cite[p.64]{CJKS-Adv}.   
%
Then it is known that 
the cohomology class $[{\phi}]$ generates $H_Q^2(S_4;\Z_4) \cong \Z_2$, and that 
the (ordered) set of the cohomology classes $[{\eta}_1]$, $[\eta_2]$ and $[\eta_{11}]$ 
generates  
$H_Q^3(S_4;\Z_4) \cong \Z_2^2 \oplus \Z_4$ ($= (\Z_2 \oplus \Z_4) \oplus \Z_2$ in this order); 
see \cite{CJKLS, CJKS-Adv}.  

\begin{prop}
We have $[z_1] = [z_2]$ and this generates $H^Q_2(S_4) \cong \Z_2$. 
\end{prop}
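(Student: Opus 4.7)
The plan is to exploit the already cited fact that $H^Q_2(S_4)\cong\Z_2$ has a unique non-zero element, so establishing $[z_1]\neq 0$ and $[z_2]\neq 0$ simultaneously forces $[z_1]=[z_2]$ and proves that this common class generates $H^Q_2(S_4)$. There is no need to construct an explicit $3$-chain cobounding $z_1-z_2$; the Kronecker pairing with $\phi$ suffices.

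First I would verify that $z_1$ and $z_2$ are indeed $2$-cycles by applying $\partial_2(x,y)=(x)-(x*y)$ to each summand and using the multiplication table of $S_4$. For instance, $\partial_2 z_1 = ((0)-(1)) + ((3)-(0)) + ((1)-(3)) = 0$ using $0*3=1$, $3*1=0$, $1*0=3$, and $\partial_2 z_2$ telescopes analogously. Next, I would evaluate the $2$-cocycle $\phi$ on each cycle by reading off the explicit expression $\phi = 2\bigl(\chi_{(0,1)}+\chi_{(0,2)}+\chi_{(1,0)}+\chi_{(1,2)}+\chi_{(2,0)}+\chi_{(2,1)}\bigr)$. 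Out of the three summands of $z_1$, only $(1,0)$ hits the support of $\phi$, and out of the four summands of $z_2$ only $-(0,1)$ does. Consequently, $\phi(z_1)=2$ and $\phi(z_2)=-2=2$ in $\Z_4$.

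Finally, since $[\phi]$ generates $H_Q^2(S_4;\Z_4)\cong\Z_2$ and the Universal Coefficient Theorem, combined with the connectedness of $S_4$ which gives $H^Q_1(S_4)\cong\Z$ and hence $\mathrm{Ext}(H^Q_1(S_4),\Z_4)=0$, identifies this cohomology group with $\mathrm{Hom}(H^Q_2(S_4),\Z_4)\cong\mathrm{Hom}(\Z_2,\Z_4)$, the class $[\phi]$ corresponds to the unique non-trivial homomorphism $1\mapsto 2$. Therefore $\phi(z_i)=2\neq 0$ forces $[z_i]\neq 0$ for $i=1,2$, and the conclusion $[z_1]=[z_2]$ together with the generation property follows immediately from $|H^Q_2(S_4)|=2$. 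The only real obstacle is bookkeeping in evaluating $\phi$ on each summand and tracking signs; no deeper difficulty arises, since the structural input $H^Q_2(S_4)\cong\Z_2$ has already been imported from \cite{CJKLS,LN}.
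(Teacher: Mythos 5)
Your proposal is correct and is essentially the paper's own argument: evaluate the $2$-cocycle $\phi$ on both cycles, find $\phi(z_1)=\phi(z_2)=2\neq 0$ in $\Z_4$, and conclude from $H^Q_2(S_4)\cong\Z_2$ that both classes are the unique non-zero element. The cycle verification and the UCT discussion are correct but add detail beyond what the paper records.
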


\begin{proof}
Since ${\phi}(z_1) = {\phi}(z_2) = 2 \in \Z_4$,  $[z_1]$ and $[z_2]$ 
are non-zero in $H^Q_2(S_4) \cong \Z_2$. 
\end{proof}

\begin{prop}
A set of homology classes $[w_1]$ and $[w_2]$ 
generates $H^Q_3(S_4) \cong \Z_2 \oplus \Z_4$. 
Moreover, $[w_1]$ and $[w_2]$ have order $4$ and $2$, respectively. 
\end{prop}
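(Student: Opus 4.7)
The plan is to detect $[w_1]$ and $[w_2]$ via the Kronecker pairing with the $3$-cocycles $\eta_1$ and $\eta_2$, and then translate the result into information about orders and independence in $H_3^Q(S_4) \cong \Z_2 \oplus \Z_4$ using the Universal Coefficient Theorem.

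First, I would evaluate each of $\eta_j(w_i) \in \Z_4$ for $i,j \in \{1,2\}$ by direct computation. Since $w_1$ and $w_2$ are signed sums of only two or three generating triples, this reduces to reading off the coefficients of the indicator cochains $\chi_{(0,3,1)}$, $\chi_{(0,1,0)}$, $\chi_{(1,0,1)}$, $\chi_{(0,3,0)}$, $\chi_{(0,3,2)}$ in the given formulas for $\eta_1$ and $\eta_2$. I expect
\[
(\eta_1(w_1),\,\eta_2(w_1)) = (0,\,1) \qquad \text{and} \qquad (\eta_1(w_2),\,\eta_2(w_2)) = (2,\,0) \in \Z_4 \times \Z_4.
\]

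Next, I would recall that by the UCT the quotient of $H_Q^3(S_4;\Z_4) \cong (\Z_2 \oplus \Z_4) \oplus \Z_2$ by the $\mathrm{Ext}$-summand generated by $[\eta_{11}]$ is identified with $\mathrm{Hom}(H_3^Q(S_4),\Z_4) \cong \Z_2 \oplus \Z_4$, and the residues of $[\eta_1]$ and $[\eta_2]$ form a generating set. Since $H_3^Q(S_4)$ has exponent dividing $4$, the natural bi-evaluation
\[
H_3^Q(S_4) \longrightarrow \mathrm{Hom}\bigl(\mathrm{Hom}(H_3^Q(S_4),\Z_4),\,\Z_4\bigr)
\]
is an isomorphism; in particular, the assignment $[w] \mapsto (\eta_1(w),\eta_2(w))$ is injective.

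With the two ingredients in place, I would conclude as follows. Since $\eta_2(w_1) = 1$ has order $4$ in $\Z_4$ and the exponent of $H_3^Q(S_4)$ is $4$, the class $[w_1]$ has order exactly $4$. For $[w_2]$, one checks that $\eta_j(2[w_2]) = 2\eta_j(w_2) = 0$ for $j = 1,2$, so by injectivity $2[w_2] = 0$; on the other hand $\eta_1(w_2) = 2 \neq 0$, so $[w_2] \neq 0$ has order exactly $2$. Finally, every $k[w_1]$ satisfies $\eta_1(k[w_1]) = 0 \neq \eta_1(w_2)$, hence $[w_2] \notin \langle [w_1]\rangle$, and so $\lvert \langle [w_1], [w_2]\rangle \rvert \geq 4 \cdot 2 = 8 = \lvert H_3^Q(S_4)\rvert$, giving generation. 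The main obstacle is the bookkeeping in the evaluation step: each $\eta_j$ is a long signed sum of indicator cochains, and the coefficients must be tracked modulo $4$ with care; once the four values above are in hand, the universal coefficient argument is essentially automatic.
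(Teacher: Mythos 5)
Your proposal is correct and follows essentially the same route as the paper: the paper likewise evaluates $\eta_1$ and $\eta_2$ on $w_1$ and $w_2$ (obtaining the same four values $\eta_1(w_1)=0$, $\eta_2(w_1)=1$, $\eta_1(w_2)=2$, $\eta_2(w_2)=0$) and deduces the orders and the independence $[w_2]\notin\langle [w_1]\rangle$ from the known structure of $H_Q^3(S_4;\Z_4)$ via the universal coefficient theorem. The only cosmetic difference is that you package the duality as injectivity of $[w]\mapsto(\eta_1(w),\eta_2(w))$, whereas the paper argues directly that $\eta_2$ sends every order-$4$ class to a nonzero element and checks $[w_2]\neq 2[w_1]$.
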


\begin{proof}
It suffices to prove that $[w_1]$ has order $4$, 
$[w_2]$ has order $2$, and $[w_2] \neq 2 [w_1]$. 
It follows from $\eta_2(w_1) = 1 \in \Z_4$ that 
$[w_1]$ has order $4$ and $\eta_2$ sends any element of order $4$ to 
a non-zero element in $\Z_4$. 
%
%
Since $\eta_2(w_2) = 0 \in \Z_4$, 
$[w_2]$ does not have order $4$ 
and $[w_2] \neq 2 [w_1]$. 
It follows from ${\eta}_1(w_2) = 2 \in \Z_4$ that 
$[w_2]$ has order $2$. 
%
\end{proof}

\begin{remark}
Computations similar to ours in the above proof have been appeared 
in the proof of \cite[Proposition 4.5]{Nos11}, 
where he attempted to compute $\pi_2(BS_4)$ of the quandle space $BS_4$ 
of $S_4$. 
We note that $\pi_2(BS_4)$ has been completely computed 
in his subsequent paper \cite{Nos15}. 
\end{remark}

\begin{theorem}
The map $\sigma_* \colon H^Q_3(S_4) \to H^Q_2(S_4)$ 
is a surjective homomorphism $\Z_2 \oplus \Z_4 \to \Z_2$ 
such that $(1,0) \mapsto 1$ and $(0,1) \mapsto 1$.  
\end{theorem}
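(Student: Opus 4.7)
The plan is to verify the theorem by a direct chain-level computation of $\sigma_3(w_1)$ and $\sigma_3(w_2)$, using that $\sigma_3 = -\partial_3^0$, so $\sigma_3(x,y,z) = (y,z) - (x,z) + (x,y)$ in $C_2^Q(S_4)$ (remembering that any chain with two equal consecutive entries vanishes in the quotient by $C_2^D$). I would then compare the resulting $2$-chains with $z_1, z_2$, using the $2$-cocycle $\phi$ of the previous subsection as a homological detector whenever a direct chain-level equality is unavailable.

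First I would treat $w_1 = (0,3,1) + (0,1,0)$. Expanding, one has $\sigma_3(0,3,1) = (3,1) - (0,1) + (0,3)$ and $\sigma_3(0,1,0) = (1,0) - (0,0) + (0,1) = (1,0) + (0,1)$ since $(0,0)$ lies in $C_2^D(S_4)$. Summing and cancelling the $\pm(0,1)$ terms yields exactly $z_1 = (0,3) + (3,1) + (1,0)$ on the nose, so $\sigma_*[w_1] = [z_1]$, the non-zero element of $H^Q_2(S_4) \cong \Z_2$.

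Next I would treat $w_2 = -(1,0,1) + (0,3,0) - (0,3,2)$. A parallel calculation (again discarding the degenerate terms $(0,0)$ and $(1,1)$) produces a $2$-chain that does not literally equal $z_2$, but differs from it by a $2$-boundary. Rather than exhibiting this boundary explicitly, I would pair with $\phi \in Z_Q^2(S_4; \Z_4)$, whose cohomology class generates $H_Q^2(S_4; \Z_4) \cong \Z_2$: since $\phi(z_1) = \phi(z_2) = 2 \in \Z_4$ by the previous proposition, the pairing with $[\phi]$ separates the non-trivial element of $H^Q_2(S_4) \cong \Z_2$ from zero. Using the explicit formula for $\phi$, one computes $\phi(\sigma_3(w_2)) = 2 \in \Z_4$, which forces $[\sigma_3(w_2)] = [z_2]$ in $H^Q_2(S_4)$.

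Combining the two chain-level computations shows $\sigma_*[w_1] = \sigma_*[w_2] = 1 \in H^Q_2(S_4) \cong \Z_2$, which is exactly the claimed surjective homomorphism $\Z_2 \oplus \Z_4 \to \Z_2$ sending $(1,0) \mapsto 1$ and $(0,1) \mapsto 1$. The only mildly delicate point is the treatment of $w_2$, where the chain-level equality with $z_2$ fails; invoking the Kronecker pairing with $\phi$ is a cleaner substitute for constructing an explicit $3$-chain whose boundary witnesses the homology, and it reduces the whole argument to a short table lookup on the support of $\phi$.
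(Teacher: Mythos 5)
Your proposal is correct and follows essentially the same route as the paper: a chain-level verification that $\sigma(w_1)=z_1$ exactly, and, since $\sigma(w_2)\neq z_2$ at the chain level, detection of $[\sigma(w_2)]$ as the non-zero class of $H^Q_2(S_4)\cong\Z_2$ via the pairing $\phi(\sigma(w_2))=2\in\Z_4$. Your explicit expansions check out, so there is nothing to add.
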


\begin{proof}
By a direct computation, we have $\sigma(w_1) = z_1$ at the chain level. 
It follows from $\phi (\sigma (w_2)) = 2 \in \Z_4$ that 
$[\sigma (w_2)]$ is non-zero in $H^Q_2(S_4) \cong \Z_2$. 
\end{proof}

\begin{remark}
Although $\sigma(w_2) \neq z_2$ at the chain level, we have $\sigma_*([w_2]) = [z_2]$. 
\end{remark}

\begin{theorem}\label{thm:S_4}
The map $\sigma^* \colon H_Q^2(S_4;\Z_4) \to H_Q^3(S_4;\Z_4)$ is 
an injective homomorphism $\Z_2 \to (\Z_2 \oplus \Z_4) \oplus \Z_2$ 
such that $1 \mapsto (1,2,0)$.  
\end{theorem}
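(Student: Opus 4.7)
Since $[\phi]$ generates $H_Q^2(S_4;\Z_4)\cong\Z_2$, the task is to identify $\sigma^*[\phi]=[\sigma^\sharp\phi]$ in the decomposition $H_Q^3(S_4;\Z_4)\cong(\Z_2\oplus\Z_4)\oplus\Z_2$ with basis $[\eta_1],[\eta_2],[\eta_{11}]$. Writing $\sigma^*[\phi]=a[\eta_1]+b[\eta_2]+c[\eta_{11}]$, my strategy is to detect the Hom-coefficients $(a,b)$ by the Kronecker pairing against the $3$-cycles $w_1,w_2$ (which cannot see the Ext-summand) and to handle the remaining coefficient $c$ by a separate argument.

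For the pairing step, the chain-level equality $\sigma(w_1)=z_1$ from the proof of the preceding theorem, together with the homological identity $\sigma_*[w_2]=[z_2]$ recorded in the following remark, yields by the usual duality of the Kronecker product
\[
\langle[\sigma^\sharp\phi],[w_i]\rangle=\langle[\phi],\sigma_*[w_i]\rangle=\phi(z_i)=2\in\Z_4\qquad(i=1,2).
\]
A direct evaluation of the explicit cocycles on $w_1,w_2$ produces the pairing table $\eta_1(w_1)=0,\ \eta_1(w_2)=2,\ \eta_2(w_1)=1,\ \eta_2(w_2)=0$, while $\eta_{11}(w_1)=\eta_{11}(w_2)=0$ is automatic from $[\eta_{11}]\in\mathrm{Ext}(H_2^Q(S_4),\Z_4)$, the kernel of the Kronecker map. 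Substituting into $\langle\sigma^*[\phi],[w_i]\rangle = a\,\eta_1(w_i)+b\,\eta_2(w_i)+c\,\eta_{11}(w_i)$ forces $b=2$ from $w_1$ and $2a=2$ in $\Z_4$, i.e.\ $a=1$, from $w_2$.

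The main obstacle is showing $c=0$, since the Ext-summand is invisible to the Kronecker pairing. My approach would be to exhibit an explicit $2$-cochain $\mu\in C_Q^2(S_4;\Z_4)$ with $\delta\mu=\sigma^\sharp\phi-\eta_1-2\eta_2$. A useful preliminary observation is that $\sigma^\sharp\phi$ is $2\Z_4$-valued (because $\phi$ is), so $\sigma^*[\phi]$ lies in the image of the coefficient-change map $H_Q^3(S_4;\Z_2)\to H_Q^3(S_4;\Z_4)$ induced by $\Z_2\xrightarrow{\times 2}\Z_4$; analyzing this image through the Bockstein long exact sequence associated with $0\to\Z_2\xrightarrow{\times 2}\Z_4\to\Z_2\to 0$ should ideally rule out an $[\eta_{11}]$-contribution on structural grounds. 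If no such shortcut is available, the fallback is a direct finite computation: only the $4^3=64$ triples of $S_4$ are involved, so a coboundary $\mu$ can be found by hand or verified by computer, in the spirit of Ishikawa's check mentioned for the dihedral case.
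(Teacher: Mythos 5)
Your proposal follows essentially the same route as the paper: the Kronecker pairing of $\sigma^\sharp\phi$, $\eta_1$, $\eta_2$, $\eta_{11}$ against $[w_1],[w_2]$ pins down $\sigma^*([\phi])=[\eta_1]+2[\eta_2]+n[\eta_{11}]$ exactly as in the printed proof, and the paper settles $n=0$ by precisely the finite coboundary computation you defer to, exhibiting $f = 2(\chi_{(0,3)}+\chi_{(1,0)}+\chi_{(2,3)})$ with $\sigma^\sharp\phi-\eta_1-2\eta_2=\delta^2(f)$. The Bockstein shortcut you float is not used in the paper (and since $\Z_2\xrightarrow{\times 2}\Z_4$ induces the zero map on $\mathrm{Ext}(H_2^Q(S_4),-)$ it is not obviously conclusive), so the explicit cochain remains the needed final step.
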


\begin{proof}
We have 
$\sigma^\sharp {\phi} (w_1) = {\phi}(\sigma(w_1)) = {\phi}(z_1) = 2$
and 
$\sigma^\sharp {\phi} (w_2) = {\phi}(\sigma(w_2)) = 2$ in $\Z_4$. 
We also have 
${\eta}_1(w_1) = 0$, ${\eta}_1(w_2) = 2$, 
$\eta_2(w_1) = 1$, $\eta_2(w_2) = 0$ and 
$\eta_{11}(w_1) = \eta_{11}(w_2) = 0$ in $\Z_4$. 
It turns out that $\sigma^* ([{\phi}]) = 
[{\eta}_1] + 2 [\eta_2] + n[\eta_{11}]$ for some $n \in \Z_2$. 
Moreover, for a $2$-cochain 
$f = 2 (
\chi_{(0,3)} + \chi_{(1,0)} + \chi_{(2,3)}
) \in C^2_Q(S_4;\Z_4)$, 
we can check that 
$\sigma^\sharp \phi - \eta_1 - 2 \eta_2 = \delta^2 (f) \in C^3_Q(S_4;\Z_4)$ 
by direct (but somewhat tedious) hand calculation. 
Hence it follows that 
$\sigma^* ([\phi]) = [\eta_1] + 2 [\eta_2] \in H^3_Q(S_4;\Z_4)$. 
\end{proof}


\appendix
\section{Fundamental quandle and fundamental class}\label{sec:fund}

The fundamental classes were explicitly written down 
for classical links \cite{Eis} and surface links \cite{Tan}. 
However 
original quandle homology theory rather than generalizd quandle homology theory 
was used in \cite{Eis}, 
and the notational convention used in \cite{Tan} was based on 
the original one \cite{CEGS} rather than ours \cite{Kam-book}. 
Therefore we review the fundamental classes and also fundamental quandles 
for completeness. 
%
Throughout this appendix, we rely on readers' familiarity with 
basic terminology of diagrams for classical links and surface links.

Let $D$ be a diagram of an oriented classical link (or surface link), and 
$A_D = \{ a_1, \ldots , a_n \}$ the set of all arcs (or sheets) of $D$. 
We assume that each arc (or sheet) of $D$ is given a normal vector to 
indicate the orientation of the link represented by $D$. 
The {\it fundamental quandle} $Q_D$ of $D$ is a quandle generated by 
$A_D$
with the following defining relations. 
At each crossing (or double point curve),  
let $a_j$ be the over arc (or sheet),  
and let $a_i$ and $a_k$ be the under arcs (or sheets) 
such that the normal vector of $a_j$ points from $a_i$ to $a_k$. 
Then the defining relation is given by $a_i * a_j=a_k$ at the crossing 
(or double point curve). 
Hereafter, we basically regard an element of $A_D$ as the element 
of the fundamental quandle $Q_D$.

Let $R_D$ be the set of all (complementary) regions of $D$ in $\R^2$ (or $\R^3$),  
and $r_\infty \in R_D$ the unbounded region of $D$. 
Then it is easy to see that 
there exists a unique map, temporarily denoted by $c^R$ in this paragraph, 
from $R_D$ to $\mathrm{As}(Q_D)$ such that 
\[
c^R(r_\infty) = e 
\quad \text{and} \quad  
c^R(r_{1}) \cdot a = c^R(r_{2})
\]
for each arc (or sheet) $a \in A_D$, where
$r_{1}$ and $r_{2}$ are the regions such that the normal vector of $a$ 
points from $r_{1}$ to $r_{2}$. 
Hereafter, we basically regard an element of $R_D$ as the element 
of the associated group $\mathrm{As}(Q_D)$ through the map $c^R$. 

Let $\tau$ be a crossing (or triple point) of $D$, 
The sign $\varepsilon(\tau) \in \{ +1, -1\}$ is determined by 
whether $\tau$ is positive ($+$) or negative ($-$). 
Among four (or eight) regions around $\tau$,  
there is a unique region such that normal vectors of all of the arcs (or sheets) 
facing the regions point from the region. 
We call the region the \textit{specified region} of $\tau$.

\subsection{Fundamental class of a classical link}
Let $D$ be a diagram of an oriented classical link $L$. 
For a crossing $\tau$ of $D$,  
the \textit{weight} $B(\tau)$ is defined by 
\[
\varepsilon(\tau) (r; x, y) \in C^Q_2(Q_D)_{\mathrm{As}(Q_D)} , 
\]
where $r$ is the specified region of $\tau$, 
$x$ is the under arc facing $r$, and 
$y$ is the over arc at $\tau$. 
Let $|D| \in  C^Q_2(Q_D)_{\mathrm{As}(Q_D)}$ be the sum of the elements $B(\tau)$ 
of all crossings of $D$. 
We can check that $|D| \in Z^Q_2(Q_D)_{\mathrm{As}(Q_D)}$
and define the \textit{fundamental class} 
$[D] \in H^Q_2(Q_D)_{\mathrm{As}(Q_D)}$ 
by its homology class. 
Then this class is independent of the choice of the diagram of $L$ 
in the following sense.

\begin{theorem}\label{thm:1-inv}
For any other diagram $D'$ of the oriented classical link $L$, 
there exists a quandle isomorphism $\alpha \colon Q_D \to Q_{D'}$ 
such that $\alpha_* ([D]) = [D']$, 
where $\alpha_* \colon H^Q_2(Q_D)_{\mathrm{As}(Q_D)} \to 
H^Q_2(Q_{D'})_{\mathrm{As}(Q_{D'})}$ is the induced isomorphism. 
\end{theorem}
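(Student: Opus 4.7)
The plan is to reduce the statement to a verification on Reidemeister moves. Any two diagrams $D, D'$ of equivalent oriented classical links are connected by a finite sequence of oriented Reidemeister moves (R1, R2, R3) and planar isotopies, so it suffices to assume that $D'$ is obtained from $D$ by a single such move applied inside a small disk, while $D$ and $D'$ agree outside. Invariance under planar isotopy is obvious since it alters neither the combinatorial data of arcs/regions nor the crossings.

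Next I would establish the isomorphism $\alpha \colon Q_D \to Q_{D'}$ from the generator-relation description: each move modifies finitely many arcs and relations, and one checks in each case that there is a bijection between the resulting quandles. This is standard and yields an induced group isomorphism $\mathrm{As}(\alpha) \colon \mathrm{As}(Q_D) \to \mathrm{As}(Q_{D'})$ compatible with the region maps $c^R$; in particular the specified regions around crossings are sent to specified regions. Consequently, the induced chain map $\alpha_\sharp \colon C^Q_2(Q_D)_{\mathrm{As}(Q_D)} \to C^Q_2(Q_{D'})_{\mathrm{As}(Q_{D'})}$ carries the weight of each unchanged crossing of $D$ to the weight of the corresponding unchanged crossing of $D'$. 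Thus $\alpha_\sharp(|D|) - |D'|$ reduces to a local difference supported on the crossings in the disk where the move takes place.

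What remains is to verify, move by move, that this local difference is a cycle that vanishes in $H^Q_2(Q_{D'})_{\mathrm{As}(Q_{D'})}$. For R1 the new crossing contributes a weight of the form $\pm(r; x, x)$, which lies in $C^D_2(Q_{D'})_{\mathrm{As}(Q_{D'})}$ and is therefore zero in $C^Q_2$. For R2 the two new crossings have opposite signs and, after identifying the specified regions and under/over arcs from the picture, their weights cancel directly. For R3 the six crossings on each side produce six weights, and one exhibits a concrete $3$-chain in $C^Q_3(Q_{D'})_{\mathrm{As}(Q_{D'})}$, namely a single generator $\pm(r; x, y, z)$ where $(x,y,z)$ records the three strands meeting at the triple-point region and $r$ is the specified region, whose boundary under $\partial_3 = \partial_3^0 - \partial_3^1$ realizes the difference of the two sides of the move. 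This relies only on the defining relations $a_i * a_j = a_k$ and the rule $c^R(r_1) \cdot a = c^R(r_2)$.

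The main obstacle will be the R3 case: the eight oriented variants of R3 (depending on crossing signs and strand orientations) require careful bookkeeping of signs $\varepsilon(\tau)$, the identification of the specified region at each of the six crossings, and the correct choice of generator and sign for the bounding $3$-chain. Once this local computation is done, combining it with the isomorphism $\alpha$ built from the move yields $\alpha_*([D]) = [D']$, completing the proof.
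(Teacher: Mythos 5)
The paper does not actually prove Theorem~\ref{thm:1-inv}: it is stated in the appendix as a known result reviewed from the literature (the fundamental class and its invariance go back to the references cited there, e.g.\ \cite{CKS01,Eis}), so there is no in-paper argument to compare against. Your proposal is the standard proof from those sources, and its overall structure is sound: reduce to a single oriented Reidemeister move, build $\alpha$ by Tietze transformations on the presentation of $Q_D$, check compatibility with the region map $c^R$ (which is forced, since $c^R$ is uniquely determined by $c^R(r_\infty)=e$ and the propagation rule, and can be computed along paths avoiding the disk of the move), and then show the local difference of cycles is degenerate (R1), cancels (R2), or bounds (R3). The identification of $\partial_3(r;x,y,z)$ with the difference of the two sides of an R3 move is exactly what the boundary map $\partial_3=\partial_3^0-\partial_3^1$ in $C^Q_*(X)_Y$ is designed to realize, so the R3 step does work as you describe.

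Two small corrections. First, an R3 move has three crossings on each side, not six; the six terms you are matching are the six summands of $\partial_3(r;x,y,z)$, which pair off with the three positive-side and three negative-side weights. Second, the R2 cancellation is not always ``direct'': depending on the relative orientation of the two strands, the two crossings need not have the same specified region a priori, and one must use the region rule $c^R(r_1)\cdot a = c^R(r_2)$ together with $(x*y)*^{-1}y = x$ to see that the two weights are literally negatives of one another in $C^Q_2(Q_{D'})_{\mathrm{As}(Q_{D'})}$. Neither point is a gap in the method, but both belong to the case-by-case bookkeeping you have (correctly) flagged as the real content of the proof; the eight oriented R3 variants can also be reduced to a single one by combining it with oriented R2 moves, which shortens that check considerably.
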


\subsection{Fundamental class of a surface link}
Let $D$ be a diagram of an oriented surface link $\mathcal{L}$. 
For a triple point $\tau$ of $D$,  
the \textit{weight} $B(\tau)$ is defined by 
\[
\varepsilon(\tau) (r; x, y,z) \in C^Q_3(Q_D)_{\mathrm{As}(Q_D)} , 
\]
where $r$ is the specified region of $\tau$, 
$x$ is the bottom sheet facing $r$, 
$y$ is the middle sheet facing $r$ and 
$z$ is the top sheet at $\tau$. 
Let $|D| \in  C^Q_3(Q_D)_{\mathrm{As}(Q_D)}$ be the sum of the elements $B(\tau)$ 
of all triple points of $D$. 
We can check that $|D| \in Z^Q_3(Q_D)_{\mathrm{As}(Q_D)}$
and define the \textit{fundamental class} 
$[D] \in H^Q_3(Q_D)_{\mathrm{As}(Q_D)}$ 
by its homology class. 
Then this class is independent of the choice of the diagram of $\mathcal{L}$ 
in the following sense.

\begin{theorem}\label{thm:2-inv}
For any other diagram $D'$ of the oriented surface link $\mathcal{L}$, 
there exists a quandle isomorphism $\alpha \colon Q_D \to Q_{D'}$ 
such that $\alpha_* ([D]) = [D']$, 
where $\alpha_* \colon H^Q_3(Q_D)_{\mathrm{As}(Q_D)} \to 
H^Q_3(Q_{D'})_{\mathrm{As}(Q_{D'})}$ is the induced isomorphism. 
\end{theorem}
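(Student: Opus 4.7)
The plan is to invoke Roseman's theorem, by which any two diagrams of the same oriented surface link are related by a finite sequence of Roseman moves together with ambient isotopies of $\R^3$. Thus it suffices to exhibit, for each Roseman move carrying $D$ to $D'$, a quandle isomorphism $\alpha \colon Q_D \to Q_{D'}$ for which $\alpha_\sharp |D|$ and $|D'|$ represent the same class in $H^Q_3(Q_{D'})_{\mathrm{As}(Q_{D'})}$. The map $\alpha$ is constructed locally from the defining relations: sheets outside the neighbourhood of the move are identified, and newly appearing sheets are expressed in terms of the old ones via the relations $a_i * a_j = a_k$ read off at the freshly created double point curves. Because the unbounded region is preserved and every other region is obtained from $r_\infty$ by acting through a word in $\mathrm{As}(Q_D)$ along a path in $\R^3$, the induced group homomorphism $\mathrm{As}(\alpha)$ carries the specified region of each triple point to the specified region of its image.

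With the isomorphism in hand, the moves split into two families. For Roseman moves whose local models contain no triple points, no weight is created or destroyed, so $\alpha_\sharp |D| = |D'|$ already at the chain level. For moves that modify triple points, one compares the collection of weights $B(\tau) = \varepsilon(\tau)(r; x, y, z)$ appearing in the two local pictures after applying $\alpha_\sharp$. In the branch-point and bubble-type moves (where triple points are born in cancelling pairs or appear adjacent to a branch line), the new weights either cancel by having equal data and opposite signs, or lie in $C^D_3(Q_{D'})_{\mathrm{As}(Q_{D'})}$ because two of the three participating sheets coincide at the branch configuration; in either case the contribution vanishes modulo the degenerate subcomplex.

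The crucial case is the \emph{tetrahedral} Roseman move, where four triple points on one side of the move are replaced by four on the other. Labelling the four sheets that participate in the local tetrahedron by $x,y,z,w$ in the order bottom, lower-middle, upper-middle, top, and writing $r$ for the unique region adjacent to all four triple points whose normal vectors point away from it, one checks by a direct combinatorial enumeration of the eight triple points that
\[
\alpha_\sharp |D| - |D'| \;=\; \partial_4\bigl((r; x, y, z, w)\bigr) \in C^R_3(Q_{D'})_{\mathrm{As}(Q_{D'})},
\]
so $|D|$ and $|D'|$ become cohomologous and $\alpha_*[D] = [D']$. The main obstacle is precisely this tetrahedral verification: one must simultaneously track the sign $\varepsilon(\tau)$, the specified region, and the ordering of bottom/middle/top sheets at each of the eight triple points, but the identity $\partial_3 \circ \partial_4 = 0$ forces the accounting to line up exactly, in direct analogy with the Reidemeister III check that establishes Theorem~\ref{thm:1-inv}.
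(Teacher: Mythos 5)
The paper does not actually prove Theorem~\ref{thm:2-inv}: the appendix only reviews the construction of $Q_D$ and $[D]$ and defers the invariance statement to the literature (\cite{CKS01,Tan}), so there is no in-paper proof to compare against. Your Roseman-move argument is precisely the standard route taken in those references, and its overall architecture is sound: construct $\alpha$ by Tietze-equivalence of the presentations, observe that moves without triple points leave the weight sum unchanged on the nose, kill the triple points adjacent to branch points via the degenerate subcomplex $C^D_*$ (this is exactly the reason the quandle complex, rather than the rack complex, is used), cancel the pairs born in the two-double-curve move, and absorb the tetrahedral move into $\partial_4$ of a single $4$-chain.

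Two caveats. First, all of the genuine content is in the checks you assert but do not perform: that the two triple points created in a cancelling pair really carry the \emph{same} specified region (not just the same sheets and opposite signs), and the eight-term enumeration for the tetrahedral move. These are exactly the places where sign and region conventions can go wrong, and a referee would want them written out or explicitly cited. Second, your closing appeal to $\partial_3\circ\partial_4=0$ is not a substitute for that enumeration: that identity guarantees that $\partial_4\bigl((r;x,y,z,w)\bigr)$ is null-homologous, but it does nothing to show that the difference of the two local weight sums \emph{equals} that boundary --- only the direct combinatorial check does. (Also a terminological slip: the chains become \emph{homologous}, not cohomologous.) With those verifications supplied or properly referenced, the argument is complete and agrees with the proof the paper implicitly relies on.
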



\section*{Acknowledgments}
The authors would like to thank Katsumi Ishikawa for 
sharing his diagrammatic idea for the proof of the main theorems.  
The statements and proofs of Proposition~\ref{prop:shadow-1} and~\ref{prop:2-knot}, 
which are keys for the proofs of Theorem~\ref{thm:shadow-1} and~\ref{thm:2-knot},  
could be formulated by interpreting his idea into algebraic language. 
It would be greatful that he kindly found 
the $2$-cochain appeared in the proof of Theorem~\ref{thm:S_4} 
by using his computer. 
They also would like to thank Takefumi Nosaka for telling them 
known results on (co)homology groups of Alexander quandles, 
and Sukuse Abe for useful comments on explicit forms of 
$4$-cocycles for Alexander quandles. 
The second-named author has been supported in part by 
the Grant-in-Aid for Scientific Research (C), (No.~JP17K05242), 
Japan Society for the Promotion of Science.


\bibliographystyle{amsplain}

\providecommand{\bysame}{\leavevmode\hbox to3em{\hrulefill}\thinspace}
\providecommand{\MR}{\relax\ifhmode\unskip\space\fi MR }
\providecommand{\MRhref}[2]{%
  \href{http://www.ams.org/mathscinet-getitem?mr=#1}{#2}
}
\providecommand{\href}[2]{#2}


\end{document}